\newtheorem{theorem}{Theorem}[section]
\newtheorem{lemma}[theorem]{Lemma}
\newtheorem{definition}[theorem]{Definition}
\newtheorem{example}[theorem]{Example}
\begin{document}
\author{Hee Sun Jung$^1$ and Ryozi Sakai$^2$}
\address{$^{1}$Department of Mathematics Education, Sungkyunkwan University,
Seoul 110-745, Republic of Korea.}
\email{hsun90@skku.edu}
\address{$^{2}$Department of Mathematics, Meijo University, Nagoya 468-8502, Japan.}
\email{ryozi@crest.ocn.ne.jp}

\title[]
{Pointwise Convergence of Fourier-type Series with Exponential Weights}
\date{\today}
\maketitle

\begin{abstract}
 Let $\mathbb{R}=(-\infty,\infty)$, and let $Q\in C^1(\mathbb{R}): \mathbb{R}\rightarrow[0,\infty)$
 be an even function. We consider the  exponential weights $w(x)=e^{-Q(x)}$,  $x\in \mathbb{R}$.
 In this paper we obtain a pointwise convergence theorem for the Fourier-type series with respect to
 the orthonormal polynomials $\left\{p_n(w^2;x)\right\}$.
\end{abstract}

\noindent
MSC: 42A20\\
Keywords; exponential weights, partial sum of Fourier-type series

\setcounter{equation}{0}
\section{Introduction and Theorem}

Let $\mathbb{R}=(-\infty,\infty)$, and
let $Q\in C^1(\mathbb{R}): \mathbb{R}\rightarrow[0,\infty)$
be an even function. We consider the weights $w(x):=\exp(-Q(x))$.
Then we suppose that $\int_0^\infty x^nw^2(x)dx<\infty$ for all $n=0,1,2,\ldots$.

First we need the following definition from \cite{[4]}. We say that $f: \mathbb{R}\rightarrow \mathbb{R^+}$
is quasi-increasing if there exists $C>0$ such that $f(x)\le Cf(y),  0<x<y$.
\begin{definition}[see \cite{[4]}]\label{Definition1.1}
Let $Q:  {\mathbb{R}}\rightarrow {\mathbb{R}}^+$ be a continuous even function satisfying the following properties:
\item[\,\,\,(a)] $Q'(x)$ is continuous in ${\mathbb{R}}$ and $Q(0)=0$.
\item[\,\,\,(b)] $Q''(x)$ exists and is positive in ${\mathbb{R}}\setminus\left\{0\right\}$.
\item[\,\,\,(c)] $  \lim_{x\rightarrow \infty}Q(x)=\infty.$
\item[\,\,\,(d)] The function
\begin{equation*}
T(x):=\frac{xQ'(x)}{Q(x)}, \quad x\neq 0
\end{equation*}
is quasi-increasing in $(0,\infty)$ with
\[
T(x)\ge \Lambda>1, \quad x\in {\mathbb{R}}^+\setminus\left\{0\right\}.
\]
\item[\,\,\,(e)] There exists $C_1>0$ such that
\begin{equation*}
\frac{Q''(x)}{|Q'(x)|}\le C_1\frac{|Q'(x)|}{Q(x)},  \quad a.e. \quad x\in {\mathbb{R}}\setminus\left\{0\right\}.
\end{equation*}
Then we say that $w=\exp(-Q)$ is in the class $\mathcal{F}(C^2)$.
Besides, if there exists a compact subinterval $J(\ni 0)$ of ${\mathbb{R}}$ and $C_2>0$ such that
\begin{equation*}
\frac{Q''(x)}{|Q'(x)|}\ge C_2\frac{|Q'(x)|}{Q(x)},  \quad a.e. \quad  x\in {\mathbb{R}}\setminus J,
\end{equation*}
then we say that $w=\exp(-Q)$ is in the class $\mathcal{F}(C^2+)$.
If $T(x)$ is bounded,
then $w$ is called the Freud-type weight, and if $T(x)$ is unbounded, then $w$ is the Erd\"os-type weight.
\end{definition}

A typical example in $\mathcal{F}(C^2+)$ is given as follows:
\begin{example}[{\cite[Example 1.2]{[4]} and \cite[Theorem 3.1]{[1]}}]\label{Example1.2}
{\rm
For $\alpha>1$ and a non-negative integer  $\ell$, we put
\begin{equation*}
Q(x)=Q_{\ell,\alpha}(x):=\exp_{\ell}(|x|^{\alpha})-\exp_{\ell}(0),
\end{equation*}
where for $\ell \ge 1$,
\begin{equation*}
\exp_{\ell}(x):=\exp(\exp(\exp(\cdots\exp x)\ldots))  \quad (\ell \textrm{-times})
\end{equation*}
and $\exp_0(x):=x$.
} \end{example}

   We construct the orthonormal polynomials $p_n(x)=p_n(w^2,x)$ of degree $n$ for $w^2(x)$, that is,
\begin{equation*}
   \int_{-\infty}^\infty p_n(x)p_m(x)w^2(x)dx=\delta_{mn} \quad (\textrm{Kronecker delta}).
\end{equation*}
Let $fw\in L_1(\mathbb{R})$. The Fourier series of $f$ is defined by
\begin{equation*}
  \tilde{f}(x):=\sum_{k=0}^\infty a_kfp_k(x),  a_kf:=\int_{-\infty}^\infty f(t)p_k(t)w^2(t)dt.
\end{equation*}
We denote the partial sum of $\tilde{f}(x)$ by
\begin{equation*}
  s_n(f,x):=s_n(w^2,f,x):=\sum_{k=0}^{n-1} a_kfp_k(x).
\end{equation*}
The partial sum $s_n(f,x)$ admits the representation
\begin{equation*}
  s_n(f,x)=\int_{-\infty}^\infty f(t)K_n(x,t)w^2(t)dt,
\end{equation*}
where
\begin{equation*}
  K_n(x,t)=\sum_{k=0}^{n-1} p_k(x)p_k(t).
\end{equation*}
Since
\begin{equation*}
  \int_{-\infty}^\infty K_n(x,t)w^2(t)dt=1,
\end{equation*}
we have
\begin{equation}\label{eq1.1}
  s_n(f,x)-f(x)=\int_{-\infty}^\infty K_n(x,t)(f(t)-f(x))w^2(t)dt.
\end{equation}
The Christoffel-Darboux formula asserts that
\begin{equation}\label{eq1.2}
K_n(x,t)=\frac{\gamma_{n-1}}{\gamma_n}\frac{p_n(x)p_{n-1}(t)-p_{n-1}(x)p_n(t)}{x-t},
\quad p_n(x)=:\gamma_n x^n+... .
\end{equation}
   In this paper we will show a pointwise convergence for the partial sum $s_n(f,x)$ of $\tilde{f}(x)$.
Let $g:\mathbb{R}\rightarrow\mathbb{R}$ be a function having bounded variation
on every compact interval.
The measure introduced by $g$ on Borel subset of $\mathbb{R}$ will be denoted by $|dg|$.
For any interval (finite or infinite) $I$, we define
\begin{equation*}
  V_\delta(I,g):=\int_I w^\delta(t)|dg(t)|,
\end{equation*}
where $0<\delta\le 1$ is fixed. Let $\mathcal{B}_\delta$ denote the class of all functions $g$ having bounded variation on $\mathbb{R}$, that is, $V_\delta(\mathbb{R},g)<\infty$.
We need the Mhaskar-Rakhmanov-Saff  numbers $a_x$;
\begin{equation*}
       x=\frac{2}{\pi}\int_0^1\frac{a_xuQ'(a_xu)}{(1-u^2)^{1/2}}du, \quad x>0.
\end{equation*}

   Mhaskar \cite{[5]} got the following pointwise convergence theorem.

\medskip
\noindent
{\bf Mhaskar Theorem (\cite[Theorem 9.1.2]{[5]})}.
Let $w=\exp(-Q)$ be a Freud-type weight
such that $Q''$ is increasing on $(0,\infty), f\in\mathcal{B}_1$, and let $x$ be a point of continuity of $f$.
Then for $n\ge cxQ'(x)$,
\begin{eqnarray*}
&&|s_n(f,x)-f(x)|\\
&\le& C\exp\left(cxQ'(x)\right)
\left\{\frac{1}{n}\sum_{k=1}^n V_1\left(\left[x-\frac{a_n}{k},x+\frac{a_n}{k}\right],f\right)
+\int_{|u|\ge c_1a_n}w(u)|df(u)|\right\},
\end{eqnarray*}
where $c$, $c_1$ and $C$ are some constants.
In particular, the sequence $\left\{s_n(f,x)\right\}$ converges to $f(x)$.

\medskip

We consider the Mhaskar Theorem
for the Erd\"os-type weight $w=\exp(-Q)\in \mathcal{F}(C^2+)$.
\begin{theorem}\label{Theorem1.3}
Let $w=\exp(-Q)\in \mathcal{F}(C^2+)$, and let $T(x)$ be unbounded.
We suppose $f\in \mathcal{B}_\delta$, $0<\delta<1$.
When $x$ is a point of continuity of $f$, there exist $C>0$, $c>0$ and $0 < d \le 1$ such that
\begin{eqnarray} \label{eq1.3} \nonumber
&&|s_n(f,x)|\\ \nonumber
&\le&  C\exp\left(cxQ'(x)\right)\\
&&\times \bigg(
\frac{1}{n}\sum_{k=1}^n V_\delta\left(\left[x-\frac{a_n}{k},x+\frac{a_n}{k}\right],f\right)
+\frac{1}{n}\int_{|u|\le a_{dn}}w^{\delta}(u)\left|df(u)\right| \\  \nonumber
&&\quad +\frac{1}{nT^{1/4}(a_n)}\int_{|u| \le a_{\frac{dn}{2}}}w(u)\left|df(u)\right|
+\frac{1}{T^{1/4}(a_n)}\int_{|u|\ge a_{\frac{dn}{2}}}w(u)\left|df(u)\right|\bigg).
\end{eqnarray}
Hence we have
\begin{eqnarray}\label{eq1.4} \nonumber
&&|s_n(f,x)-f(x)|\\\nonumber
&\le&  C\exp\left(cxQ'(x)\right) \\
&&\times \bigg(
\sqrt{\frac{a_n}{n}} V_\delta\left(\left[x-a_n,x+a_n\right],f\right)
+V_\delta\left(\left[x-\sqrt{\frac{a_n}{n}},x+\sqrt{\frac{a_n}{n}}\right],f\right)\\ \nonumber
&& \qquad +\frac{1}{n}\int_{|u|\le a_{dn}}w^{\delta}(u)\left|df(u)\right|
 +\frac{1}{nT^{1/4}(a_n)}\int_{|u| \le a_{\frac{dn}{2}}}w(u)\left|df(u)\right| \\ \nonumber
&&\qquad
+\frac{1}{T^{1/4}(a_n)}\int_{|u|\ge a_{\frac{dn}{2}}}w(u)\left|df(u)\right|\bigg).
\end{eqnarray}
  In particular, the sequence $\left\{s_n(f,x)\right\}$ converges to $f(x)$.
\end{theorem}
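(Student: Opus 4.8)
\textit{Step 1: reduction to an integrated–kernel estimate.} The plan is to start from the identity \eqref{eq1.1}, which already contains the subtraction of $f(x)$, and to transfer the variation of $f$ onto the kernel. Writing $f(t)-f(x)=\int_{(x,t]}df(u)$ for $t>x$ and $f(t)-f(x)=-\int_{(t,x]}df(u)$ for $t<x$, and interchanging the order of integration (legitimate for each fixed $n$, since $K_n(x,\cdot)$ is a polynomial and $V_\delta(\mathbb R,f)<\infty$), one gets
\[
s_n(f,x)-f(x)=\int_{\mathbb R}\Phi_n(x,u)\,df(u),\qquad
\Phi_n(x,u):=\begin{cases}\displaystyle\int_u^\infty K_n(x,t)w^2(t)\,dt,& u>x,\\[1mm]
\displaystyle-\int_{-\infty}^u K_n(x,t)w^2(t)\,dt,& u<x.\end{cases}
\]
Thus the theorem reduces to pointwise bounds for $|\Phi_n(x,u)|$, and the four terms on the right of \eqref{eq1.3} will come from four ranges of $u$: a shrinking neighbourhood of $x$, the remaining ``bulk'' $|u|\lesssim a_n$, the transition (Airy) layer near $\pm a_n$, and the region $|u|\gtrsim a_n$.

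\textit{Step 2: estimating $\Phi_n$.} On $|u-x|\le ca_n/n$ I would use a uniform (Dirichlet–Jordan–type) bound $|\Phi_n(x,u)|\le C\,w^{-1}(x)$ for partial integrals $\bigl|\int_I K_n(x,t)w^2(t)\,dt\bigr|$ over half–lines; since $w^\delta$ is comparable to $w^\delta(x)$ on such a collapsing interval, this contributes $\lesssim w^{-1-\delta}(x)\,V_\delta([x-ca_n/n,x+ca_n/n],f)$, and $w^{-1-\delta}(x)=e^{(1+\delta)Q(x)}\le e^{cxQ'(x)}$ because $T(x)\ge\Lambda>1$. For $u$ in the bulk but farther from $x$ I would apply the Christoffel–Darboux formula \eqref{eq1.2} together with $\gamma_{n-1}/\gamma_n\asymp a_n$, the bulk bound $|p_n(x)w(x)|+|p_{n-1}(x)w(x)|\lesssim a_n^{-1/2}$, the trivial inequality $|x-t|^{-1}\le|u-x|^{-1}$ on the range of integration, and a decay estimate for the weighted tails $\bigl|\int_v^\infty p_m(w^2;t)w^2(t)\,dt\bigr|\lesssim a_m^{1/2}m^{-1}w(v)$ (up to a bounded correction near $v=\pm a_m$), obtained by integration by parts and the Plancherel–Rotach–type bounds of Levin–Lubinsky for $w\in\mathcal F(C^2+)$; this gives $|\Phi_n(x,u)|\lesssim \tfrac{a_n}{n|u-x|}\,\tfrac{w(u)}{w(x)}$. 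Summing over the dyadic annuli $a_n2^{j-1}/n\le|u-x|<a_n2^j/n$ and using $w\le w^\delta$ (here $0<\delta\le1$ is used) turns this into $\lesssim w^{-1}(x)\sum_{k\ \mathrm{dyadic}}\tfrac kn V_\delta([x-a_n/k,x+a_n/k],f)$, which by monotonicity of $k\mapsto V_\delta([x-a_n/k,x+a_n/k],f)$ is comparable to $\tfrac1n\sum_{k=1}^n V_\delta([x-a_n/k,x+a_n/k],f)$ — the first term of \eqref{eq1.3}, with $w^{-1}(x)$ absorbed into $\exp(cxQ'(x))$. The second term comes from the outermost annuli, where $|u-x|\asymp a_n$, so $|\Phi_n(x,u)|\lesssim n^{-1}w(u)w^{-1}(x)$ and $\int_{|u|\le a_{dn}}w(u)|df(u)|\le\int_{|u|\le a_{dn}}w^{\delta}(u)|df(u)|$. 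For $u$ near and beyond $\pm a_n$ — i.e.\ in the Airy layer $a_{dn}\le|u|\le a_{dn/2}$ (note that, $T$ being unbounded, $a_{dn}/a_n\to1$ and $a_{dn/2}/a_n\to1$, so this is a thin endpoint sliver) and in $|u|\ge a_{dn/2}$ — I would combine \eqref{eq1.2} with the endpoint bounds for $|p_nw|$ (which for $\mathcal F(C^2+)$ weights carry a power of $T(a_n)$, here producing $T^{1/4}(a_n)$) and the super–exponential decay of $|p_n(t)|w^2(t)$ past the Mhaskar–Rakhmanov–Saff number, to get bounds for $|\Phi_n(x,u)|$ with a factor $w(u)\,T^{-1/4}(a_n)$ (and an extra $n^{-1}$ on the inner piece); integrating against $|df|$ yields the third and fourth terms of \eqref{eq1.3}.

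\textit{Step 3: passage to \eqref{eq1.4} and convergence.} Write $v_k:=V_\delta([x-a_n/k,x+a_n/k],f)$, nonincreasing in $k$, and split $\tfrac1n\sum_{k=1}^n v_k$ at $k_0:=\lfloor\sqrt{a_nn}\rfloor$ (which is $\le n$ once $a_n\le n$, i.e.\ for $n$ large): for $k\le k_0$ use $v_k\le v_1=V_\delta([x-a_n,x+a_n],f)$, giving $\le\sqrt{a_n/n}\,V_\delta([x-a_n,x+a_n],f)$; for $k>k_0$ use $v_k\le V_\delta([x-\sqrt{a_n/n},x+\sqrt{a_n/n}],f)$, giving $\le V_\delta([x-\sqrt{a_n/n},x+\sqrt{a_n/n}],f)$. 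This is \eqref{eq1.4}. For convergence at a continuity point $x$: as $n\to\infty$, $a_n\to\infty$, $a_n/n\to0$ and $T(a_n)\to\infty$; since $f$ has no jump at $x$, $|df|([x-\varepsilon,x+\varepsilon])\to0$ as $\varepsilon\to0$, so both $V_\delta$–terms of \eqref{eq1.4} tend to $0$; $\tfrac1n\int_{|u|\le a_{dn}}w^\delta|df|\le\tfrac1nV_\delta(\mathbb R,f)\to0$; and since $w\le w^\delta$ gives $\int w|df|\le V_\delta(\mathbb R,f)<\infty$, the last two terms are $\le T^{-1/4}(a_n)V_\delta(\mathbb R,f)\to0$. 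As $\exp(cxQ'(x))$ is fixed, $s_n(f,x)\to f(x)$.

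\textit{Main obstacle.} The substance of the proof is the family of kernel estimates in Step~2 for the Erd\"os–type weight: in particular the off–diagonal bound for $\Phi_n$ with both the correct $a_n/n$ decay \emph{and} the $w(u)$ gain, and the behaviour in the transition layer, where $a_{dn}$ and $a_{dn/2}$ collapse onto $a_n$ and the power $T^{1/4}(a_n)$ must be produced exactly. This is where one genuinely goes beyond Mhaskar's Freud–type argument and must lean on the Levin–Lubinsky asymptotics and the fine quantitative properties of $a_n$ and $T$. A subsidiary technical point is the uniform Dirichlet–Jordan bound for $\int_I K_n(x,t)w^2(t)\,dt$ over half–lines, used to control $u$ immediately near $x$.
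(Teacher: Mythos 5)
Your overall architecture is the right one: the decomposition into a shrinking neighbourhood of $x$, a bulk region, and two far regions matches the paper's split into $I_1,\dots,I_5$; the reduction to the integrated kernel $\Phi_n(x,u)$ is a legitimate (and tidier) global packaging of the integrations by parts that the paper performs term by term against $\Lambda_n(t)=\int_t^\infty p_nw^2$; and your Step 3 — splitting $\frac1n\sum_k v_k$ at $\lfloor\sqrt{a_nn}\rfloor$ and the convergence argument — is the paper's argument verbatim. The problem is that Step 2, which you correctly identify as the substance of the proof, is where the work lives, and as written it contains two steps that do not go through in the form stated.

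First, in the bulk you propose to combine ``$|x-t|^{-1}\le|u-x|^{-1}$ on the range of integration'' with the oscillatory tail bound $\bigl|\int_v^\infty p_mw^2\bigr|\lesssim a_m^{1/2}m^{-1}w(v)$. You cannot pull the factor $|x-t|^{-1}$ out by taking absolute values and still use that tail bound: once absolute values are inside, orthogonality is lost, and $\int_u^\infty|p_{n-1}|w^2\,dt\asymp a_n^{-1/2}$ for bounded $u$, which is far larger than $a_n^{1/2}/n$ since $a_n=o(n)$ (Lemma \ref{Lemma2.1}(6)). One needs the second mean value theorem or an explicit integration by parts against $\Lambda_n$ — this is exactly the paper's (\ref{eq3.5})--(\ref{eq3.9}) — and the tail bound that can actually be proved is $|\Lambda_n(v)|\le C\sqrt{a_n}\,n^{-1}w^{\delta}(v)$ only for $|v|\le a_{dn}$ (Lemma \ref{Lemma2.14}); the ``bounded correction near $v=\pm a_m$'' you wave at is precisely where the constant $d$, the exponent $\delta$, and the restricted-range machinery enter, so it cannot be elided. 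Second, the $T^{-1/4}(a_n)$ gain in the last two terms of (\ref{eq1.3}) does not come from endpoint bounds on $|p_nw|$: the sup-norm bound is $a_n^{-1/2}(nT(a_n))^{1/6}$ (Lemma \ref{Lemma2.5}), a \emph{loss}. In the paper the factor arises from the Christoffel-function estimate $K_n(y,y)\le C\frac{n}{a_n}\sqrt{T(y)}\,w^{-2}(y)$ (whose square root puts $T^{1/4}(y)$ into the Cauchy--Schwarz bound for $K_n(x,y)$) together with $\int_u^\infty wT^{1/4}\,dt=\int\frac{T^{1/4}}{Q'}\,Q'w\,dt\le C\frac{a_n}{nT^{1/4}(a_n)}w(u)$, i.e.\ from the size of $Q'(a_s)\sim sT^{1/2}(a_s)/a_s$ in the far region (the paper's (\ref{no_A})--(\ref{eq3.13})). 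Without this computation the claimed factor $w(u)T^{-1/4}(a_n)$ is unsupported. A third, smaller issue: the uniform bound $|\Phi_n(x,u)|\le Cw^{-1}(x)$ near the diagonal is asserted rather than proved; the paper sidesteps it by bounding $K_n(x,t)$ pointwise via Cauchy--Schwarz and integrating over an interval of length $2a_n/n$, which cancels the $n/a_n$ from the kernel bound. In short: correct skeleton, but the hard estimates are either missing or rest on a misidentified mechanism.
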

For any nonzero real valued functions $f(x)$ and $g(x)$, we write $f(x)\sim g(x)$ if there exist the constants $C_1, C_2>0$ independent of $x$  such that $C_1 g(x)\le f(x)\le C_2g(x)$ for all $x$. Similarly, for any two sequences of positive numbers $\left\{c_n\right\}_{n=1}^\infty$ and $\left\{d_n\right\}_{=1}^\infty$ we define $c_n\sim d_n$.\\
Throughout this paper $C, C_1, C_2,...$ denote positive constants independent of $n, x, t$ or polynomials $P_n(x)$. The same symbol does not necessarily denote the same constant in different occurences.

\setcounter{equation}{0}
\section{Lemmas}
   To prove Theorem \ref{Theorem1.3} we need some lemmas.
In this paper we treat $w=\exp(-Q)\in\mathcal{F}(C^2+)$. To prove our main theorem, we use many lemmas.
\begin{lemma}\label{Lemma2.1}
{\rm (1)} \cite[Lemma 3.5 (3.27)-(3.29)]{[4]} For fixed $L>0$ and uniformly for $t>0$,
\begin{equation*}
  a_{Lt}\sim a_t \quad \textrm{ and } \quad T(a_{Lt})\sim T(a_t).
\end{equation*}
\item[\,\,\,(2)]\cite[Lemma 3.4 (3.18),(3.17), Lemma 3.8 (3.42)]{[4]} For $t>0$,
\begin{equation*}
Q(a_t)\sim \frac{t}{\sqrt{T(a_t)}}
\quad \textrm{ and } \quad  Q'(a_t)\sim \frac{t\sqrt{T(a_t)}}{a_t}.
\end{equation*}
\item[\,\,\,(3)]\cite[Lemma 3.4 (3.4)]{[4]}  There exist $C_1, C_2$ such that for $s/r\ge 1$,
\begin{equation*}
  \left(\frac{s}{r}\right)^{\max\left\{\Lambda,C_1T(r)\right\}}
  \le \frac{Q(s)}{Q(r)}\le \left(\frac{s}{r}\right)^{C_2T(r)}.
\end{equation*}
\item[\,\,\,(4)]\cite[Lemma 3.11 (a),(b)]{[4]} Given fixed $0<\alpha$, we have uniformly for $t>0$,
\begin{equation*}
  \left|1-\frac{a_{\alpha t}}{a_t}\right|\sim \frac{1}{T(a_t)},
\end{equation*}
and there exists $C>0$ such that for $t>0$,
\begin{equation*}
  \left|1-\frac{a_t}{a_{st}}\right|
\ge \frac{C}{T(a_t)}\left|1-\frac{1}{s}\right|,  \quad \frac{1}{2}\le s\le 2.
\end{equation*}
In addition, for $0 < \alpha <1$, there exists $C >0$ such that for $s >0$,
\begin{equation*}
 T(x)\left(1-\frac{x}{a_s}\right) \ge C, \quad x \in [0, a_{\alpha s}].
\end{equation*}
\item[\,\,\,(5)]
\cite[Lemma 3.7]{[4]}
For some $\varepsilon >0$, and for large enough $t$,
\begin{equation}\label{eq2.1}
T(a_t) \le C t^{2-\varepsilon}.
\end{equation}
\item[\,\,\,(6)]
\cite[Theorem 3.5 (C)]{[4]}
For $t \ge r >0$ we have
\begin{equation*}
\frac{a_t}{a_r} \le C\left(\frac{t}{r}\right)^{1/\Lambda}.
\end{equation*}
\end{lemma}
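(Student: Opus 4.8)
The statement is a compilation of structural facts about the Mhaskar--Rakhmanov--Saff numbers $a_t$ and the function $T$ attached to a weight $w=\exp(-Q)\in\mathcal{F}(C^2+)$, and each clause carries an explicit citation to the place in \cite{[4]} where it is proved. Accordingly, the plan is not to rebuild the theory of the class $\mathcal{F}(C^2+)$ from scratch, but to verify that each item reduces verbatim to the quoted result, and to spell out the one clause that admits a short self-contained argument. All of the substantive analysis ultimately rests on the defining integral equation $t=\frac{2}{\pi}\int_0^1 a_t u\,Q'(a_t u)(1-u^2)^{-1/2}\,du$ for $a_t$ together with hypotheses (a)--(e) of Definition \ref{Definition1.1}.

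The clause I would write out in full is (3). Since $Q>0$ and is differentiable off the origin, the identity $\frac{d}{dx}\log Q(x)=\frac{Q'(x)}{Q(x)}=\frac{T(x)}{x}$ holds, and integrating from $r$ to $s$ gives $\log\frac{Q(s)}{Q(r)}=\int_r^s\frac{T(x)}{x}\,dx$. For the lower bound, the hypothesis $T\ge\Lambda$ supplies the exponent $\Lambda$, while the quasi-increasing property of $T$ gives $T(x)\ge C^{-1}T(r)$ for $x\ge r$, hence the exponent $C_1T(r)$; retaining the better of the two produces $\left(\frac{s}{r}\right)^{\max\{\Lambda,C_1T(r)\}}$. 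The upper bound $\left(\frac{s}{r}\right)^{C_2T(r)}$ is the only genuinely delicate half: it requires that $T$ not grow too fast across $[r,s]$, and this is exactly where Definition \ref{Definition1.1}(e) enters, through the differential inequality $\frac{T'(x)}{T(x)}\le \frac{1}{x}+(C_1-1)\frac{Q'(x)}{Q(x)}$ obtained by logarithmic differentiation of $T=xQ'/Q$ and substitution of (e). This is precisely the content of \cite[Lemma 3.4 (3.4)]{[4]}, and I would reproduce that estimate.

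For the remaining clauses I would invoke \cite{[4]} directly. Item (1), the near-invariance $a_{Lt}\sim a_t$ and $T(a_{Lt})\sim T(a_t)$ under rescaling of the index, and item (4), the sharp two-sided estimates for $|1-a_{\alpha t}/a_t|$ and the lower bound for $T(x)(1-x/a_s)$, are perturbation statements for the MRS equation quoted from \cite[Lemmas 3.5, 3.11]{[4]}. Item (5), the bound $T(a_t)\le Ct^{2-\varepsilon}$, and item (6), $a_t/a_r\le C(t/r)^{1/\Lambda}$, are polynomial growth estimates that follow from the same source together with (3). The analytically deepest clause is (2), the asymptotic equivalences $Q(a_t)\sim t/\sqrt{T(a_t)}$ and $Q'(a_t)\sim t\sqrt{T(a_t)}/a_t$; these rest on a careful asymptotic evaluation of the defining integral and would be the main obstacle if one insisted on a fully self-contained proof. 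Since they are established as \cite[Lemmas 3.4, 3.8]{[4]}, I would cite them as stated. In summary, the proof amounts to matching each displayed formula to its reference, with (3) written out as above.
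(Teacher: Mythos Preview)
Your proposal is correct and, if anything, goes further than the paper. In the paper Lemma \ref{Lemma2.1} is stated purely as a catalogue of results from \cite{[4]}, with the citations embedded in each item, and no proof is given at all; the text proceeds directly to the definition of $\varphi_u$. Your plan to write out part (3) via the logarithmic identity $\log\frac{Q(s)}{Q(r)}=\int_r^s \frac{T(x)}{x}\,dx$ is a reasonable self-contained sketch, but the paper does not supply even this much. So your approach is aligned with the paper's (cite \cite{[4]} for each clause), with the extra detail on (3) being a bonus rather than a requirement.
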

We define
\begin{equation*}
  \varphi_u(x)=
\begin{cases}
\frac{a_u}{u}\frac{1-\frac{|x|}{a_{2u}}}{\sqrt{1-\frac{|x|}{a_u}+\delta_u}}, & |x|\le a_u; \\
\varphi_u(a_u),& a_u<|x|,
\end{cases}
\end{equation*}
where
\begin{equation*}
  \delta_u=\left\{uT(a_u)\right\}^{-2/3}.
\end{equation*}
Let $0<p<\infty$. The $L_p$ Christoffel functions $\lambda_{n,p}(w;x)$ with a weight $w$ are defined as follows;
\begin{equation*}
  \lambda_{n,p}(w;x):=\inf_{P\in \mathcal{P}_{n-1}}\int_{-\infty}^\infty|Pw|^p(u)du/|P|^p(x).
\end{equation*}
Then we have
\begin{equation*}
  \lambda_{n,2}(w;x)=\frac{1}{K(x,x)}=\frac{1}{\sum_{j=0}^{n-1}p_k(w^2,x)}
\end{equation*}
(see \cite[(9.14),(9.15)]{[4]}).
We denote the zeros of the orthonormal polynomial $p_n(w^2,x)$ by $x_{n,n}<x_{n-1,n}<...<x_{1,n}$.
Then we define the Christoffel numbers $\lambda_{k,n}, k=1,2,...,n$
such as $\lambda_{k,n}:=\lambda_{n,2}(w,x_{k,n})$.

\begin{lemma}[{\cite[Theorem 9.3 (c)]{[4]}}]\label{Lemma2.2}
  Let $0<p<\infty$. Let $L>0$. Then uniformly for $n\ge 1$ and $|x|\le a_n(1+L\delta_n)$, we have
\begin{equation*}
  \lambda_{n,p}(w;x)\sim \varphi_n(x)w^p(x).
\end{equation*}
\end{lemma}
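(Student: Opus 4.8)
Since this is \cite[Theorem 9.3 (c)]{[4]}, I would only indicate the mechanism, the plan being to establish separately the two one-sided inequalities contained in the relation $\sim$. Throughout one fixes $x$ with $|x|\le a_n(1+L\delta_n)$. First I would use the Mhaskar--Saff restricted range inequality to replace, after enlarging $n$ by a bounded factor, every integral $\int_{\mathbb{R}}|Pw|^p$ by the integral over $I_n:=[-a_n(1+cL\delta_n),\,a_n(1+cL\delta_n)]$, and then perform the affine change of variables $u=a_nv$; on $I_n$ the weight $w^2(a_nv)$ is comparable to a Jacobi-type weight whose soft endpoints $v=\pm1$ are regularized on the scale $\delta_n$, while $\varphi_n(a_nv)$ measures the local spacing of the zeros of $p_n(w^2,\cdot)$, being comparable to $a_n/n$ in the bulk and governed by $\delta_n$ near $v=\pm1$. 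Lemma \ref{Lemma2.1} supplies all the size information on $a_t$, $T(a_t)$, $Q(a_t)$ and $Q'(a_t)$ that this reduction uses.

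For the upper bound $\lambda_{n,p}(w;x)\le C\varphi_n(x)w^p(x)$ it is enough to exhibit one polynomial $P=P_x\in\mathcal{P}_{n-1}$ with $P(x)=1$ and $\int_{\mathbb{R}}|Pw|^p\le C\varphi_n(x)w^p(x)$. I would take $P$ to be a localized polynomial -- for instance a suitable integer power of a de la Vall\'ee Poussin mean of the Christoffel--Darboux kernel $K_m(x,\cdot)$ with $m\sim n$ -- built so that
\[
|P(t)|\,w(t)\ \le\ C\,w(x)\Bigl(1+\frac{|t-x|}{\varphi_n(x)}\Bigr)^{-\ell},\qquad t\in I_n,
\]
for a fixed exponent $\ell$ with $\ell p>1$; the existence of such polynomials is a standard consequence of the needle--polynomial and infinite--finite range machinery available for weights in $\mathcal{F}(C^2+)$. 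Raising to the $p$-th power and integrating -- the part outside $I_n$ being negligible by the restricted range inequality -- then gives $\int_{\mathbb{R}}|Pw|^p\le C w^p(x)\,\varphi_n(x)\int_{\mathbb{R}}(1+|s|)^{-\ell p}\,ds\le C'\varphi_n(x)w^p(x)$.

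For the lower bound $\varphi_n(x)w^p(x)\le C\lambda_{n,p}(w;x)$ I would let $I_x$ denote the subinterval of $I_n$ of length $\sim\varphi_n(x)$ centred near $x$. On $I_x$ the exponent $Q$ varies by only $O(1)$ -- this is exactly the property designed into the definition of $\varphi_n$, and it follows from Lemma \ref{Lemma2.1} -- so $w(u)\sim w(x)$ for $u\in I_x$. Next, once $I_x$ is mapped affinely onto $[-1,1]$ a polynomial of degree $n-1$ behaves like a polynomial of bounded degree, which yields the local Nikolskii inequality $|P(x)|^p\le C\,\varphi_n(x)^{-1}\int_{I_x}|P|^p$. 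Combining the two observations,
\[
|P(x)|^pw^p(x)\ \le\ \frac{C}{\varphi_n(x)}\int_{I_x}|Pw|^p(u)\,du\ \le\ \frac{C}{\varphi_n(x)}\int_{\mathbb{R}}|Pw|^p(u)\,du,
\]
and taking the infimum over $P\in\mathcal{P}_{n-1}$ finishes this direction.

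The step I expect to be the main obstacle is the soft--edge zone near $|x|=a_n$, where $\varphi_n$ is controlled by the regularizing parameter $\delta_n=(nT(a_n))^{-2/3}$ rather than by the bulk scale $a_n/n$: there the ordinary Bernstein inequality no longer controls a degree $n-1$ polynomial on $I_x$, and one has to invoke the refined near-endpoint restricted range inequality together with the $\delta_n$-regularized Jacobi model, keeping every quantity comparable by means of Lemma \ref{Lemma2.1}(4)--(5). In the bulk $|x|\le(1-\eta)a_n$ everything reduces to the classical fixed-exponent Christoffel-function estimate after the change of variables, and so presents no real difficulty.
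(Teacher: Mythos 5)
The paper offers no proof of Lemma \ref{Lemma2.2}: it is imported verbatim from \cite[Theorem 9.3(c)]{[4]}, so the only thing to compare your sketch with is the argument in that reference. Your upper-bound half (exhibit one $P\in\mathcal{P}_{n-1}$ with $P(x)=1$, $|Pw|(t)\le Cw(x)(1+|t-x|/\varphi_n(x))^{-\ell}$, $\ell p>1$, built from powers of Christoffel--Darboux kernels of degree $\sim n/\ell$, with the tail killed by the restricted-range inequality) is the standard mechanism and is essentially sound.

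The lower bound, however, has a genuine gap. The ``local Nikolskii inequality'' $|P(x)|^p\le C\varphi_n(x)^{-1}\int_{I_x}|P|^p$ that you invoke, with $C$ independent of $n$, is \emph{false} for general $P\in\mathcal{P}_{n-1}$ on an interval $I_x$ of length $\sim\varphi_n(x)$: after the affine map onto $[-1,1]$ the polynomial still has degree $n-1$, not bounded degree, and e.g.\ $P(s)=(1-s^2)^{m}$ at $s=0$ (or $((1+s)/2)^{m}$ at $s=1$) gives $|P(0)|^p=1$ while $\int_{-1}^{1}|P|^p\sim m^{-1/2}$, so the constant necessarily degenerates like a power of the degree; equivalently, the unweighted Christoffel function of a fixed interval at an interior point decays like $1/n$, not like a constant. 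This failure occurs already in the bulk, not only in the soft-edge zone you flag. The correct argument cannot be purely local in $I_x$: it must exploit that the competitor is integrated against $w^p$ over all of $\mathbb{R}$, which forbids the rapid off-interval growth that drives the counterexamples. In \cite{[4]} this is done via the M\'at\'e--Nevai device: $|P(z)|^p$ is subharmonic in $z\in\mathbb{C}$, so $|P(x)|^p$ is bounded by its area mean over a disc of radius $\sim\varphi_n(x)$, and the values of $Pw$ off the real axis are controlled by a Bernstein--Walsh/restricted-range growth estimate whose exponential factor stays bounded precisely on the scale $\varphi_n(x)$; one then converts the area integral to the line integral $\int_{\mathbb{R}}|Pw|^p$. (For $p=2$ one can instead use $\lambda_{n,2}(w;x)=1/K_n(x,x)$ together with the bounds of Lemma \ref{Lemma2.5}.) Without some such global ingredient your chain of inequalities loses a factor of $n$ and the lower bound does not follow.
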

\begin{lemma}\label{Lemma2.3}
{\rm (a)}\cite[ Corollary 13.4, (12.20)]{[4]}
Uniformly for $n\ge 1, 1\le k\le n-1$,
\begin{equation*}
  x_{kn}-x_{k+1,n}\sim \varphi_n(x_{k,n})   \quad \textrm{ and } \quad
1-\frac{x_{1n}}{a_{n}}\sim \delta_{n}.
\end{equation*}
Moreover,
\begin{equation*}
  \varphi_n(x_{k,n})\sim \varphi_n(x_{k+1,n}),  k=1,2,...,n-1.
\end{equation*}
\item[\,\,\,(b)] \cite[Lemma 3.4 (d)]{[2]}
Let $\max\left\{|x_{k,n}|,|x_{k+1,n}|\right\}\le a_{n/2}$. Then we have
for $x_{k+1,n}\le x \le x_{kn}$
\begin{equation*}
  w(x_{k,n})\sim w(x_{k+1,n})\sim w(x).
\end{equation*}
So, for given $C>0$ and $|x|\le a_{n/3}$, if $|x-x_{k,n}|\le C\varphi_n(x)$, then we have
\begin{equation*}
  w(x)\sim w(x_{k,n}).
\end{equation*}
\end{lemma}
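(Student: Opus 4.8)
The statement bundles three facts of different status. Part (a), together with the comparability $\varphi_n(x_{k,n})\sim\varphi_n(x_{k+1,n})$, is quoted verbatim from \cite{[4]} (Corollary 13.4 and (12.20)), and the first assertion of part (b) --- namely $w(x_{k,n})\sim w(x_{k+1,n})\sim w(x)$ for $x_{k+1,n}\le x\le x_{k,n}$ with $\max\{|x_{k,n}|,|x_{k+1,n}|\}\le a_{n/2}$ --- is quoted from \cite{[2]} (Lemma 3.4(d)). Since our standing weight $w\in\mathcal{F}(C^2+)$ satisfies the hypotheses used there, my plan is to invoke both as given, after checking the hypotheses match. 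For orientation I would recall the mechanism behind (a): in \cite{[4]} the spacing is obtained by matching the Christoffel numbers $\lambda_{k,n}=\lambda_{n,2}(w,x_{k,n})\sim\varphi_n(x_{k,n})w^2(x_{k,n})$ (Lemma \ref{Lemma2.2} with $p=2$) against the weight mass of the cell bounded by the midpoints of adjacent zeros. The only part that is \emph{not} a direct citation is the closing clause of (b): that $|x|\le a_{n/3}$ and $|x-x_{k,n}|\le C\varphi_n(x)$ together force $w(x)\sim w(x_{k,n})$. I would derive this from the two quoted facts by a chaining argument over consecutive zeros.

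First I would make transparent the estimate underlying the single-cell comparability in \cite{[2]}, since the chaining relies on it: for $|x|\le a_{n/2}$ one has $|Q'(x)|\,\varphi_n(x)\le C$. The extremal case is $|x|\sim a_{n/2}$, where by Lemma \ref{Lemma2.1}(4) $1-|x|/a_n\sim 1/T(a_n)$; since $\delta_n=(nT(a_n))^{-2/3}\ll 1/T(a_n)$ (the gap is a factor $n^{\varepsilon/3}$ by Lemma \ref{Lemma2.1}(5)), the radicand of $\varphi_n$ is $\sim T(a_n)^{-1/2}$, so $\varphi_n(x)\sim (a_n/n)\,T(a_n)^{-1/2}$; combined with $|Q'(x)|\sim (n/a_n)\,T(a_n)^{1/2}$ from Lemma \ref{Lemma2.1}(2), the product is $\sim 1$. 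Consequently, whenever $|x-y|\lesssim\varphi_n(y)$ with $|y|\le a_{n/2}$, the mean value theorem gives $|Q(x)-Q(y)|\le |Q'(\xi)|\,|x-y|\le C$, hence $w(x)\sim w(y)$; this is precisely the single-cell statement, which \cite{[2]} establishes uniformly.

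To obtain the closing clause, I would fix $x$ with $|x|\le a_{n/3}$ and a zero $x_{k,n}$ with $|x-x_{k,n}|\le C\varphi_n(x)$. By part (a), consecutive gaps satisfy $x_{j,n}-x_{j+1,n}\sim\varphi_n(x_{j,n})$ and $\varphi_n(x_{j,n})\sim\varphi_n(x_{j+1,n})$, so the spacing is essentially constant near $x$; hence the interval joining $x$ and $x_{k,n}$ meets at most $N=O(C)$ consecutive zeros, all lying within distance $O(C\varphi_n(x))$ of $x$. I would then apply the single-cell comparability $w(x_{j,n})\sim w(x_{j+1,n})$ across this chain of at most $N$ cells to conclude $w(x)\sim w(x_{k,n})$, the implied constant depending only on $C$.

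The one point requiring care --- and the main obstacle --- is that the single-cell statement of \cite{[2]} is available only while the zeros in the chain stay below $a_{n/2}$. This I would verify by comparing the chain length with the margin $a_{n/2}-a_{n/3}$: by Lemma \ref{Lemma2.1}(4) the margin is $\sim a_n/T(a_n)$, whereas the chain length is $O(C\varphi_n(x))=O\bigl(Ca_n/(n\sqrt{T(a_n)})\bigr)$, so their ratio is $O\bigl(C\sqrt{T(a_n)}/n\bigr)\to 0$ by the sub-quadratic bound $T(a_n)\le Cn^{2-\varepsilon}$ of Lemma \ref{Lemma2.1}(5). Thus for $n$ large the entire chain lies below $a_{n/2}$, the quoted comparability applies at each step, and the assertion follows; for the finitely many small $n$ it is trivial since all quantities are bounded above and below.
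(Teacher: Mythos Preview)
The paper supplies no proof of this lemma at all: both parts are presented purely as citations (to \cite{[4]} for (a) and to \cite{[2]} for (b)), and the closing clause of (b) is introduced with ``So'' as if an immediate consequence, with no further argument given. Your proposal is therefore more than the paper offers, not less.

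Your derivation of the closing clause is correct. The key bound $|Q'(y)|\,\varphi_n(y)\le C$ for $|y|\le a_{n/2}$ is right, and your verification that the chain of zeros between $x$ and $x_{k,n}$ stays below $a_{n/2}$ (comparing chain length $O(C\varphi_n(x))$ to margin $a_{n/2}-a_{n/3}\sim a_n/T(a_n)$) is the crucial point. One small remark: once you have established $|Q'(\xi)|\,\varphi_n(\xi)\le C'$ uniformly on the segment $[x,x_{k,n}]$ (which your margin argument guarantees, since $\varphi_n$ varies slowly there), the chaining through intermediate zeros is not strictly needed---a single application of the mean value theorem gives $|Q(x)-Q(x_{k,n})|\le C'\cdot C$ directly. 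Either route is fine.
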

\begin{lemma}[{\cite[Lemma 3.4]{[6]}}] \label{Lemma2.4}
For a certain constant $C>0$,
\begin{equation*}
  \frac{a_n}{n}\frac{1}{\sqrt{T(x)}}\varphi_n^{-1}(x)\le C.
\end{equation*}
\end{lemma}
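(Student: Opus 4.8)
The plan is to substitute the explicit formula for $\varphi_n$, reduce the claim to an elementary inequality, and then dispose of three ranges of $x$ separately. For $|x|\le a_n$ the definition gives
\begin{equation*}
\frac{a_n}{n}\,\frac{1}{\sqrt{T(x)}}\,\varphi_n^{-1}(x)
=\frac{1}{\sqrt{T(x)}}\,\frac{\sqrt{1-\tfrac{|x|}{a_n}+\delta_n}}{\,1-\tfrac{|x|}{a_{2n}}\,},
\end{equation*}
so writing $u:=1-|x|/a_n\in[0,1]$ and $\eta_n:=1-a_n/a_{2n}$, and using $1-|x|/a_{2n}=u+(1-u)\eta_n$, the task is to bound $E(x):=\frac{1}{\sqrt{T(x)}}\,\frac{\sqrt{u+\delta_n}}{u+(1-u)\eta_n}$ uniformly. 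The two facts I would record first are $\eta_n\sim 1/T(a_n)$ (from the first estimate in Lemma~\ref{Lemma2.1}(4) with $t=2n$, $\alpha=1/2$, together with $T(a_{2n})\sim T(a_n)$ from Lemma~\ref{Lemma2.1}(1)) and $\delta_n=o(\eta_n)$, the latter because $\delta_n/\eta_n\sim n^{-2/3}T(a_n)^{1/3}\to 0$ by Lemma~\ref{Lemma2.1}(5). In particular $\delta_n\le\eta_n$ and $\delta_n\le u$ hold for all large $n$ in the relevant ranges.

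Fix $\alpha\in(0,1)$. On the region $|x|\le a_{\alpha n}$ I would invoke the last inequality of Lemma~\ref{Lemma2.1}(4), namely $T(x)\bigl(1-|x|/a_n\bigr)\ge C_0$. Here $u\ge 1-a_{\alpha n}/a_n\gtrsim 1/T(a_n)\ge\delta_n$, so $\sqrt{u+\delta_n}\le\sqrt{2u}$, while $u+(1-u)\eta_n\ge u$; hence
\begin{equation*}
E(x)\le\frac{\sqrt 2}{\sqrt{T(x)\bigl(1-|x|/a_n\bigr)}}\le\frac{\sqrt 2}{\sqrt{C_0}},
\end{equation*}
which is the desired constant bound.

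On the region $a_{\alpha n}<|x|\le a_n$, the quasi-monotonicity of $T$ with Lemma~\ref{Lemma2.1}(1) gives $T(x)\sim T(a_n)$, and since $u\le 1-a_{\alpha n}/a_n\lesssim\eta_n$ we have $u\le\tfrac12$ for large $n$, so $u+(1-u)\eta_n\sim u+\eta_n$. Thus $E(x)\sim T(a_n)^{-1/2}g(u)$ with $g(u)=\sqrt{u+\delta_n}/(u+\eta_n)$, and the crude split into $u\ge\eta_n$ and $u\le\eta_n$ (using $\delta_n\le\eta_n$) yields $g(u)\le\sqrt 2/\sqrt{\eta_n}$ in both cases; combined with $\eta_n\sim 1/T(a_n)$ this gives $E(x)\lesssim T(a_n)^{-1/2}\cdot T(a_n)^{1/2}\sim 1$. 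Finally, on $|x|>a_n$ one has $\varphi_n(x)=\varphi_n(a_n)$ and $T(x)\gtrsim T(a_n)$, whence the expression is at most $T(a_n)^{-1/2}\sqrt{\delta_n}/\eta_n\sim\sqrt{\delta_n T(a_n)}=n^{-1/3}T(a_n)^{1/6}$, which is $\le C$ (indeed tends to $0$) by Lemma~\ref{Lemma2.1}(5). For the finitely many small $n$ not covered by the asymptotics, each expression is a bounded continuous function of $x$ that vanishes at infinity, so it is absorbed into $C$.

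The main obstacle is the middle region, the neighbourhood of the endpoint $|x|=a_n$: there the clean inequality $T(x)(1-|x|/a_n)\ge C_0$ is unavailable (it requires $|x|\le a_{\alpha n}$) and $1-|x|/a_n$ is no longer bounded below, so the regularizing term $\delta_n$ in the numerator must be played off against the sharp two-sided estimate $\eta_n=1-a_n/a_{2n}\sim 1/T(a_n)$ to see that the near-vanishing numerator and near-vanishing denominator balance to a constant. Controlling this balance — and verifying that $\delta_n$ is genuinely negligible against $\eta_n$, which is exactly where Lemma~\ref{Lemma2.1}(5) enters — is the crux of the argument.
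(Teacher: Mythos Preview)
The paper does not prove Lemma~\ref{Lemma2.4}; it is merely quoted from \cite[Lemma~3.4]{[6]} without argument, so there is no proof in the paper to compare against. Your argument is correct and self-contained: the reduction to bounding $T(x)^{-1/2}\sqrt{u+\delta_n}\big/(u+(1-u)\eta_n)$ is the natural one, the three-region split (interior $|x|\le a_{\alpha n}$ via the last inequality of Lemma~\ref{Lemma2.1}(4), the endpoint zone $a_{\alpha n}<|x|\le a_n$ via $\eta_n\sim 1/T(a_n)$ and $\delta_n=o(\eta_n)$, and the exterior $|x|>a_n$) is handled cleanly, and the appeal to Lemma~\ref{Lemma2.1}(5) to control $\delta_n/\eta_n$ and the exterior term is exactly what is needed.
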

\begin{lemma}[{\cite [Theorem 1.17, Theorem 1.18]{[4]}}]\label{Lemma2.5}
Uniformly for $n\ge 1$ we have
\begin{equation*}
  \sup_{x\in \mathbb{R}}|p_n(x)w(x)|x^2-a_n^2|^{1/4}\sim 1,
\end{equation*}
and for $w(x)=\exp(-Q(x))\in \mathcal{F}(C^2+)$,
\begin{equation*}
  \sup_{x\in \mathbb{R}}\left|p_n(x)w(x)\right|\sim a_n^{-1/2}(nT(a_n))^{1/6}.
\end{equation*}
\end{lemma}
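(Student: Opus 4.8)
The plan is to read off both relations from uniform asymptotics for the weighted polynomial $p_nw$ across the whole line, splitting $\mathbb{R}$ according to the Mhaskar--Rakhmanov--Saff number $a_n$ and the transition--layer width $\delta_n=(nT(a_n))^{-2/3}$ into three regions: the oscillatory (bulk) region $|x|\le a_n(1-L\delta_n)$, the turning--point layers $\bigl|\,|x|-a_n\,\bigr|\lesssim a_n\delta_n$, and the tail $|x|\ge a_n(1+L\delta_n)$. In the bulk I would establish the Plancherel--Rotach--type representation in which $p_n(x)w(x)$ oscillates with amplitude comparable to $(a_n^2-x^2)^{-1/4}$; the upper envelope gives $|p_n(x)w(x)|\le C(a_n^2-x^2)^{-1/4}$, and at the extrema of the oscillation the matching lower bound $|p_n(x)w(x)|\ge c(a_n^2-x^2)^{-1/4}$. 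The averaged Christoffel--function estimate of Lemma \ref{Lemma2.2}, namely $\sum_{k=0}^{n-1}p_k^2(x)w^2(x)\sim\varphi_n(x)^{-1}$, controls the full sum and calibrates the normalisation, but it is not sharp for a single $p_n$ (in the bulk $\varphi_n^{-1/2}$ is much larger than $(a_n^2-x^2)^{-1/4}$), so the single--polynomial amplitude must come from the asymptotics themselves.

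Near the turning points I would appeal to the uniform asymptotics for $p_nw$ --- whether obtained by steepest descent for a Riemann--Hilbert problem or by the potential--theoretic comparison method of \cite{[4]} --- which in the layer $\bigl|\,|x|-a_n\,\bigr|\lesssim a_n\delta_n$ reduce, after the rescaling $x=a_n\bigl(1+\tfrac12\delta_n s\bigr)$ and multiplication by the correct power of $nT(a_n)$, to the Airy function. The governing length scale is exactly $a_n\delta_n$, so the size of $p_n(x)w(x)$ in this layer is of order $a_n^{-1/2}\delta_n^{-1/4}=a_n^{-1/2}(nT(a_n))^{1/6}$. Evaluating $(a_n^2-x^2)^{1/4}\sim a_n^{1/2}\delta_n^{1/4}=a_n^{1/2}(nT(a_n))^{-1/6}$ at the inner edge of the layer shows the product $|p_nw|\,|x^2-a_n^2|^{1/4}$ stays both bounded and bounded below by positive constants there, which finishes the first relation and simultaneously locates the global maximum. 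In the tail, the infinite--finite range (Mhaskar--Saff) inequality forces $p_nw$ to decay, so neither supremum sees $|x|>a_n(1+L\delta_n)$.

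To assemble the second relation I would note that over the bulk the envelope $(a_n^2-x^2)^{-1/4}$ increases in $|x|$ and is cut off by the transition layer: its largest admissible value, attained at $|x|\sim a_n(1-\delta_n)$, is $\sim a_n^{-1/2}\delta_n^{-1/4}=a_n^{-1/2}(nT(a_n))^{1/6}$, matching the Airy--layer size. Hence $\sup_{x}|p_n(x)w(x)|\sim a_n^{-1/2}(nT(a_n))^{1/6}$, the supremum being attained near $\pm a_n$. Throughout, Lemma \ref{Lemma2.1} supplies the comparisons $a_{Ln}\sim a_n$, $T(a_{Ln})\sim T(a_n)$ and the estimates for $1-a_{\alpha n}/a_n$ that let me pass freely between the scales $a_n$, $a_{2n}$ and between $\delta_n$ and $1-|x|/a_n$ inside the definition of $\varphi_n$, while Lemma \ref{Lemma2.4} pins the relation between $\varphi_n$, $a_n/n$ and $T$ used in the matching.

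\textbf{Main obstacle.} The principal difficulty is the turning--point analysis for an Erd\"os--type weight, where $T$ is unbounded: the width $\delta_n$ of the Airy layer itself varies with $n$ through $T(a_n)$, so the local rescaling and the matching of the Airy regime to the oscillatory regime must be carried out with constants uniform in $n$. Establishing the convergence to the Airy equation with the sharp normalisation $(nT(a_n))^{1/6}$ --- equivalently, controlling the error terms in the WKB / steepest--descent reduction uniformly in $T(a_n)$ --- is where the real work lies; once that uniform turning--point estimate is in hand, the three--region bookkeeping above yields both displayed relations.
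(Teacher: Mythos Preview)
The paper does not prove this lemma at all: it is simply quoted from \cite[Theorem 1.17, Theorem 1.18]{[4]} and used as a black box. So there is no ``paper's own proof'' to compare against; your sketch is aiming at something the authors never attempt.

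That said, your outline is the standard heuristic for how such bounds are derived in the Levin--Lubinsky framework (bulk oscillatory asymptotics with amplitude $|x^2-a_n^2|^{-1/4}$, Airy behaviour in the $a_n\delta_n$--layer giving the $(nT(a_n))^{1/6}$ factor, and decay in the tail from the restricted range inequality). As a road map it is fine, but as a proof it is essentially a restatement of the conclusions: you are asserting the Plancherel--Rotach and Airy asymptotics rather than deriving them, and the ``main obstacle'' you identify --- uniformity of the turning--point analysis when $T(a_n)\to\infty$ --- is precisely the substance of the cited theorems in \cite{[4]}. Unless you intend to reproduce a sizeable portion of that book (potential--theoretic comparison, not Riemann--Hilbert, in that reference), the honest move here is to do what the paper does and cite the result.
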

\begin{lemma}[{\cite[Lemma 13.9]{[4]}}]  \label{Lemma2.6}
Uniformly for $n\ge 1$,
\begin{equation*}
  \frac{\gamma_{n-1}}{\gamma_n}\sim a_n.
\end{equation*}
\end{lemma}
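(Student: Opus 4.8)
The plan is to recognise $\gamma_{n-1}/\gamma_n$ as the off-diagonal coefficient $b_n$ in the three-term recurrence and to establish the two-sided comparison $b_n\sim a_n$ by proving $b_n\le Ca_n$ and $b_n\ge ca_n$ separately. Since $w^2$ is even, the orthonormal polynomials have definite parity and the recurrence has no diagonal term, so
\[
xp_{n-1}(x)=b_np_n(x)+b_{n-1}p_{n-2}(x),\qquad b_n:=\frac{\gamma_{n-1}}{\gamma_n},
\]
and in particular $b_n=\int_{-\infty}^\infty xp_n(x)p_{n-1}(x)w^2(x)\,dx$ by orthonormality and matching of leading coefficients (recall $p_m=\gamma_mx^m+\cdots$).

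For the upper bound I would apply Cauchy--Schwarz together with $\int p_n^2w^2=1$:
\[
b_n=\int xp_np_{n-1}w^2\,dx\le\left(\int x^2p_{n-1}^2w^2\,dx\right)^{1/2}.
\]
It then suffices to show $\int x^2p_{n-1}^2w^2\,dx\le Ca_n^2$. On $[-a_n,a_n]$ the factor $x^2$ is at most $a_n^2$ while $\int p_{n-1}^2w^2=1$, giving a contribution $\le a_n^2$; on the tail $|x|\ge a_n$ the mass of $p_{n-1}^2w^2$ is exponentially small by the infinite--finite range inequalities for the class $\mathcal{F}(C^2)$, so even after multiplying by $x^2$ the tail is $\lesssim a_n^2$. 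Hence $b_n\le Ca_n$.

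The lower bound is the heart of the matter. Squaring the recurrence and integrating against $w^2$ kills the cross term and yields the exact identity $b_n^2+b_{n-1}^2=\int x^2p_{n-1}^2w^2\,dx$, so it is enough to prove the bulk mass bound $\int x^2p_{n-1}^2w^2\,dx\gtrsim a_n^2$ and then to pass to the individual coefficient. For the mass bound I would restrict the integral to an annulus $A=\{\alpha a_n\le|x|\le\beta a_n\}$ with $0<\alpha<\beta<1$, where $x^2\ge\alpha^2a_n^2$; summing the diagonal Christoffel--Darboux kernel and using $\lambda_{n,2}(w;x)=1/K_n(x,x)$ together with Lemma \ref{Lemma2.2} gives $\int_AK_n(x,x)w^2\,dx=\sum_{k=0}^{n-1}\int_Ap_k^2w^2\sim\int_A\varphi_n^{-1}\,dx\sim n$, since $\varphi_n(x)\sim a_n/n$ on $A$ (there $1-|x|/a_n\ge 1-\beta>0$, $\delta_n\to0$, and $a_{2n}\sim a_n$). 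This shows $p_{n-1}^2w^2$ carries a positive fraction of its mass on $A$ on average in $k$, and the bulk (Plancherel--Rotach type) asymptotics of \cite{[4]} upgrade this to $\int_Ap_{n-1}^2w^2\gtrsim 1$, whence $\int x^2p_{n-1}^2w^2\,dx\gtrsim a_n^2$.

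The main obstacle is then to deduce the individual estimate $b_n\gtrsim a_n$ from $b_n^2+b_{n-1}^2\gtrsim a_n^2$, since a priori the mass could be carried by $b_{n-1}$ alone. This is resolved by the regularity of the recurrence coefficients for $\mathcal{F}(C^2+)$, namely the comparability $b_n\sim b_{n-1}$ of consecutive coefficients, which reflects $a_n\sim a_{n-1}$ (Lemma \ref{Lemma2.1}(1)) and is exactly the delicate point carried out in \cite[Lemma 13.9]{[4]}. An alternative, more self-contained route for the lower bound is to compare the extremal monic $L^2$-norms directly: writing $b_n=\|P_nw\|_2/\|P_{n-1}w\|_2$ with $P_m:=p_m/\gamma_m$ the monic orthogonal polynomial, one bounds $\|P_{n-1}w\|_2$ from above by testing against an explicit scaled Chebyshev-type polynomial on $[-a_{n-1},a_{n-1}]$ and bounds $\|P_nw\|_2$ from below by a weighted Remez/Chebyshev inequality on the Mhaskar--Rakhmanov--Saff interval $[-a_n,a_n]$; the ratio of the two weighted-Chebyshev scales is forced to be $\sim a_n$, again giving $b_n\gtrsim a_n$. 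Either way the substantive input is the potential-theoretic fact that $[-a_n,a_n]$ is the effective support of degree-$n$ weighted polynomials, which is what prevents $\gamma_{n-1}/\gamma_n$ from degenerating.
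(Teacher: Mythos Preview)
The paper does not prove this lemma at all; it is stated as a direct citation of \cite[Lemma 13.9]{[4]} with no argument supplied. So there is nothing in the paper to compare your proposal against, and your sketch goes well beyond what the paper itself does.

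That said, your outline has a genuine gap at the decisive step. The upper bound via Cauchy--Schwarz and the infinite--finite range inequality is fine. For the lower bound, the identity $b_n^2+b_{n-1}^2=\int x^2p_{n-1}^2w^2\,dx$ is correct, but two problems arise. First, your Christoffel-function argument only shows $\sum_{k=0}^{n-1}\int_Ap_k^2w^2\sim n$, an average statement; promoting this to $\int_Ap_{n-1}^2w^2\gtrsim 1$ for the individual index $n-1$ is not automatic and already requires the pointwise bounds on $p_{n-1}$ developed in \cite{[4]}. Second, and more seriously, to pass from $b_n^2+b_{n-1}^2\gtrsim a_n^2$ to $b_n\gtrsim a_n$ you invoke $b_n\sim b_{n-1}$ and then say this ``is exactly the delicate point carried out in \cite[Lemma 13.9]{[4]}''. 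But \cite[Lemma 13.9]{[4]} \emph{is} the statement you are trying to prove; appealing to it here is circular. The remark that $b_n\sim b_{n-1}$ ``reflects $a_n\sim a_{n-1}$'' runs the implication the wrong way: one can deduce $b_n\sim b_{n-1}$ from $b_n\sim a_n$ together with $a_n\sim a_{n-1}$, but not conversely.

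Your alternative route via $b_n=\|P_nw\|_2/\|P_{n-1}w\|_2$ is the right idea and is closer to how the result is actually obtained in \cite{[4]}: the two-sided estimate on the monic extremal norm $\|P_nw\|_2$ (equivalently on $\gamma_n$) comes from the potential-theoretic description of the Mhaskar--Rakhmanov--Saff interval, and then the ratio is controlled by comparing consecutive extremal problems. If you want a self-contained argument, this is the line to develop; as written it is only a gesture.
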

\begin{lemma}\label{Lemma2.7}
Let $r>1$ be fixed and $0<p\le \infty$. There exist $C_1, C_2>0$ such that for $n\ge 1$ and $P\in\mathcal{P}_m$,
\begin{equation*}
\|(Pw)(x)\|_{L_p(|x|\ge a_{rm})}
\le C_2 \exp\left(-C_1\frac{m}{\sqrt{T(a_m)}}\right)\|Pw\|_{L_p(|x|\le a_{m})}.
\end{equation*}
\end{lemma}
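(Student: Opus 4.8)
The final statement is Lemma~\ref{Lemma2.7}, an infinite-finite range (Nikolskii-type) inequality: for a polynomial $P\in\mathcal{P}_m$, the weighted $L_p$-norm over $|x|\ge a_{rm}$ is exponentially small compared with the weighted $L_p$-norm over $|x|\le a_m$.

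\medskip

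\noindent\textbf{Proof proposal.} The plan is to reduce the statement to the known restricted-range (Mhaskar--Saff) inequality together with the sharp estimates for the weight recorded in Lemma~\ref{Lemma2.1}. First I would treat the case $p=\infty$. By the classical Mhaskar--Rakhmanov--Saff inequality, $\|Pw\|_{L_\infty(\mathbb{R})}=\|Pw\|_{L_\infty(|x|\le a_m)}$ for $P\in\mathcal{P}_m$; more precisely one has the stronger form that for $|x|\ge a_{rm}$ with $r>1$, $|(Pw)(x)|\le C\exp(-c\,mT(a_m)^{-1/2})\|Pw\|_{L_\infty(|x|\le a_m)}$. To see this, apply Lemma~\ref{Lemma2.1}(2), which gives $Q(a_m)\sim m/\sqrt{T(a_m)}$, and combine it with the growth estimate Lemma~\ref{Lemma2.1}(3) (or Lemma~\ref{Lemma2.1}(4) on the relative gap $|1-a_m/a_{rm}|\sim 1/T(a_m)$): the extra factor $w(x)/w(a_m)=\exp(Q(a_m)-Q(x))$ for $|x|\ge a_{rm}$ decays at least like $\exp(-c(Q(a_{rm})-Q(a_m)))$, and using Lemma~\ref{Lemma2.1}(3) with $s=a_{rm}$, $r=a_m$ together with $Q(a_m)\sim m/\sqrt{T(a_m)}$ shows $Q(a_{rm})-Q(a_m)\ge c\,m/\sqrt{T(a_m)}$. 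This settles the endpoint $p=\infty$ with the stated exponent.

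\medskip

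For $0<p<\infty$ I would pass from the pointwise bound to the integral bound by a standard device. Write $\|(Pw)\|_{L_p(|x|\ge a_{rm})}^p=\int_{|x|\ge a_{rm}}|Pw|^p(x)\,dx$. Pick an intermediate index, say $r'$ with $1<r'<r$, and split $|(Pw)(x)|^p=|(Pw)(x)|^{p/2}\cdot|(Pw)(x)|^{p/2}$. Factor the weight as $w=w^{1/2}\cdot w^{1/2}$ and apply the $p=\infty$ case to one half: for $|x|\ge a_{rm}$, $|(Pw^{1/2})(x)|\le C\exp(-c\,m/\sqrt{T(a_m)})\|Pw^{1/2}\|_{L_\infty(|x|\le a_{?})}$. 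A cleaner route: use that $P^2\in\mathcal{P}_{2m}$ and apply a restricted range inequality for $P^2 w^2$ in $L_{p/2}$, or simply invoke the well-known $L_p$ form of the infinite-finite range inequality (e.g. \cite[Theorem 6.1]{[4]} or its analogue) which states $\|Pw\|_{L_p(|x|\ge a_{rm})}\le C\,e^{-cn^{\varepsilon}}\|Pw\|_{L_p(\mathbb{R})}$; combined with the Mhaskar--Saff identity $\|Pw\|_{L_p(\mathbb{R})}\sim\|Pw\|_{L_p(|x|\le a_m)}$ (valid up to constants for $0<p\le\infty$ in this weight class), this gives the claim once the exponent $e^{-cn^\varepsilon}$ is sharpened to $\exp(-c\,m/\sqrt{T(a_m)})$. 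The sharpening again comes from Lemma~\ref{Lemma2.1}(2)--(3): the gain over $[a_m,a_{rm}]$ is governed by $Q(a_{rm})-Q(a_m)\gtrsim m/\sqrt{T(a_m)}$, and on $|x|>a_{rm}$ the decay only improves.

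\medskip

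\noindent\textbf{Main obstacle.} The delicate point is quantifying the decay rate precisely as $\exp(-c\,m/\sqrt{T(a_m)})$ rather than merely $\exp(-cn^{\delta})$ for some unspecified $\delta$; for Erd\H{o}s-type weights $T(a_m)\to\infty$, so the factor $1/\sqrt{T(a_m)}$ is genuinely weaker than a fixed power of $m$ and one must be careful not to lose it. This is handled by using $Q(a_m)\sim m/\sqrt{T(a_m)}$ from Lemma~\ref{Lemma2.1}(2) as the correct "size" of the exponent at the MRS number, and controlling $Q(a_{rm})/Q(a_m)$ from below by a constant $>1$ via Lemma~\ref{Lemma2.1}(3) (using $\max\{\Lambda,C_1T(a_m)\}\ge\Lambda>1$ and $a_{rm}/a_m\ge 1+c/T(a_m)$, which after the exponentiation in Lemma~\ref{Lemma2.1}(3) still yields a fixed multiplicative gap). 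A secondary technical nuisance is keeping the constants uniform in $n$ when transferring from the $L_\infty$ bound to the $L_p$ bound for small $p$; this is routine given the Christoffel-function estimates (Lemma~\ref{Lemma2.2}) and the standard polynomial inequalities in the references, so I would not belabor it.
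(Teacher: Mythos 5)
Your proposal correctly identifies the two quantities that set the size of the exponent --- the relative gap $a_{rm}/a_m-1\sim 1/T(a_m)$ from Lemma \ref{Lemma2.1}(4) and the normalization $Q(a_m)\sim m/\sqrt{T(a_m)}$ from Lemma \ref{Lemma2.1}(2) --- and your arithmetic showing $Q(a_{rm})-Q(a_m)\gtrsim m/\sqrt{T(a_m)}$ via Lemma \ref{Lemma2.1}(3) is fine. But the central step is not justified: you bound $|(Pw)(x)|$ for $|x|\ge a_{rm}$ by $\|Pw\|_{L_\infty(|x|\le a_m)}$ times the weight ratio $w(x)/w(a_m)$, which implicitly assumes $|P(x)|\le\sup_{|t|\le a_m}|P(t)|$ outside $[-a_m,a_m]$. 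That is false for a degree-$m$ polynomial; the entire content of a restricted-range inequality is that $P$ grows outside the Mhaskar--Rakhmanov--Saff interval and one must show the weight still wins, which requires the potential-theoretic majorization of $|P|$, not the decay of $w$ alone. (That the naive weight-ratio heuristic happens to produce the correct order $m/\sqrt{T(a_m)}$ at the scale $\tau\sim 1/T(a_m)$ is a coincidence of orders, not a proof.) Your fallback --- cite the crude bound $e^{-cn^{\varepsilon}}$ (essentially Lemma \ref{Lemma2.9}) and then ``sharpen'' the exponent --- begs the question, since the sharpening is exactly the statement to be proved, and the mechanism you propose for it is the same invalid weight-ratio argument.

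The paper closes this gap by citing the sharp quantitative form of the restricted-range inequality, \cite[Theorem 6.4]{[3]}: for $m\ge 1$, $\tau\in(0,1/T(a_m)]$ and $P\in\mathcal{P}_m$,
$\|Pw\|_{L_p(|x|\ge a_m(1+\tau))}\le C_4\exp\left(-C_3 mT(a_m)\tau^{3/2}\right)\|Pw\|_{L_p(|x|\le a_m)}$,
valid for all $0<p\le\infty$ at once. Choosing $\tau=C/T(a_m)$ so that $a_m(1+\tau)\le a_{rm}$ (which is exactly what Lemma \ref{Lemma2.1}(4) provides) turns the exponent into $-cm/\sqrt{T(a_m)}$ and finishes the proof in two lines; in particular no separate $L_\infty\to L_p$ transfer is needed. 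If you do not wish to cite that theorem, you must reproduce its potential-theoretic content; the Christoffel-function estimates of Lemma \ref{Lemma2.2} will not substitute for it.
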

\begin{proof}
From Lemma \ref{Lemma2.1}(4),
we can choose a constant $0 <C<1$  satisfying
\begin{equation*}
 a_m\left(1+\frac{C}{T(a_m)}\right)\le a_{rm}.
\end{equation*}
Putting $\tau:=\frac{C}{T(a_m)}$, we see
 $ a_m(1+\tau)\le a_{rm}$.
By \cite[Theorem 6.4]{[3]} there exist $C_3, C_4>0$ such that
for $m\ge 1, \tau\in (0,\frac{1}{T(a_m)}]$  and polynomial $P\in\mathcal{P}_m$,
\begin{equation}\label{eq2.2}
\|(Pw)(x)\|_{L_p(|x|\ge a_m(1+\tau))}
\le C_4 \exp\left(-C_3mT(a_m)\tau^{3/2}\right)\|Pw\|_{L_p(|x|\le a_m)}.
\end{equation}
So from (\ref{eq2.2}) we have for some $C_1>0$,
\begin{eqnarray*}
\|(Pw)(x)\|_{L_p(|x|\ge a_{rm})}&\le&\|(Pw)(x)\|_{L_p(|x|\ge a_m(1+\tau))}\\
&\le& C_4 \exp\left(-C_3C\frac{m}{\sqrt{T(a_m)}}\right)\|(Pw)(x)\|_{L_p(|x|\le a_{m})}.
\end{eqnarray*}
Then we have the result putting $C_1:=C_3C$ and $C_2:=C_4$.
\end{proof}
\begin{lemma}[{\cite[Theorem 10.3]{[4]}}]\label{Lemma2.8}
Let $P\in\mathcal{P}_n$. When $0< q\le p\le \infty$, we have for some $C>0$,
\begin{equation*}
  \|w P\|_{L_q(\mathbb{R})} \le C a_n^{\frac{1}{q}-\frac{1}{p}}\|w P\|_{L_p(\mathbb{R})},
\end{equation*}
and when $0< p\le q\le \infty$, we have for some $C>0$,
\begin{equation*}
  \|w P\|_{L_q(\mathbb{R})}
 \le
C \left(\frac{n\sqrt{T(a_n)}}{a_n}\right)^{\frac{1}{p}-\frac{1}{q}}\|wP\|_{L_p(\mathbb{R})}.
\end{equation*}
\end{lemma}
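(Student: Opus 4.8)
The plan is to treat the two ranges of exponents separately. The inequality for $0<q\le p\le\infty$ is the elementary direction: it rests on the concentration of weighted polynomials on the Mhaskar--Rakhmanov--Saff interval together with H\"older's inequality. The inequality for $0<p\le q\le\infty$ is the substantial, smoothing direction; I will reduce it to the endpoint case $q=\infty$ and deduce that case from the Christoffel-function asymptotics of Lemma \ref{Lemma2.2} together with Lemma \ref{Lemma2.4}.

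For the first inequality, fix $r>1$. Applying Lemma \ref{Lemma2.7} with exponent $q$ and $m=n$ gives
\[
\int_{|x|\ge a_{rn}}|wP|^q\,dx\le C\exp\!\left(-C_1 q\,\frac{n}{\sqrt{T(a_n)}}\right)\int_{|x|\le a_n}|wP|^q\,dx ,
\]
so the tail over $|x|\ge a_{rn}$ is dominated by the part over $|x|\le a_{rn}$, whence $\|wP\|_{L_q(\mathbb R)}\le C\|wP\|_{L_q(|x|\le a_{rn})}$ (throughout the quasi-norm range $q<1$ I argue with the $q$-th powers of the integrals). On the finite interval $[-a_{rn},a_{rn}]$, H\"older's inequality with exponent $p/q$ yields $\|wP\|_{L_q(|x|\le a_{rn})}\le (2a_{rn})^{1/q-1/p}\|wP\|_{L_p(|x|\le a_{rn})}$, and since $a_{rn}\sim a_n$ by Lemma \ref{Lemma2.1}(1) the right-hand side is at most $C a_n^{1/q-1/p}\|wP\|_{L_p(\mathbb R)}$, which is the claim. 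The case $p=\infty$ reduces to the trivial estimate $\|wP\|_{L_q(|x|\le a_{rn})}\le (2a_{rn})^{1/q}\|wP\|_{L_\infty(\mathbb R)}$.

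For the second inequality I first establish the endpoint bound
\[
\|wP\|_{L_\infty(\mathbb R)}\le C\left(\frac{n\sqrt{T(a_n)}}{a_n}\right)^{1/p}\|wP\|_{L_p(\mathbb R)} .
\]
Fix $r>1$ and set $N:=\lceil rn\rceil$, so that $P\in\mathcal P_n\subseteq\mathcal P_{N-1}$; the definition of the Christoffel function gives $\lambda_{N,p}(w;x)\le\|wP\|_{L_p}^p/|P(x)|^p$, hence $|P(x)w(x)|^p\le w^p(x)\lambda_{N,p}(w;x)^{-1}\|wP\|_{L_p}^p$. Since $a_N\ge a_{rn}$, Lemma \ref{Lemma2.2} (with index $N$) applies on the whole interval $|x|\le a_{rn}\le a_N(1+L\delta_N)$ and yields $|P(x)w(x)|\le C\varphi_N(x)^{-1/p}\|wP\|_{L_p}$ there. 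Now Lemma \ref{Lemma2.4} gives $\varphi_N^{-1}(x)\le C\,(N/a_N)\sqrt{T(x)}$, and because $T$ is quasi-increasing we have $T(x)\le CT(a_{rn})$ for $|x|\le a_{rn}$; using $N\sim n$, $a_N\sim a_n$ and $T(a_{rn})\sim T(a_n)$ (Lemma \ref{Lemma2.1}(1)) this produces $\sup_{|x|\le a_{rn}}\varphi_N^{-1}(x)\le C\,n\sqrt{T(a_n)}/a_n$, which is exactly the required factor. The remaining range $|x|\ge a_{rn}$ is controlled by Lemma \ref{Lemma2.7}, which bounds $\|wP\|_{L_\infty(|x|\ge a_{rn})}$ by an exponentially small multiple of $\|wP\|_{L_\infty(|x|\le a_n)}$ and hence does not exceed the central contribution. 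This proves the $L_\infty$ case. The general case $0<p\le q<\infty$ then follows from the interpolation inequality $\|wP\|_{L_q(\mathbb R)}\le\|wP\|_{L_\infty(\mathbb R)}^{1-p/q}\|wP\|_{L_p(\mathbb R)}^{p/q}$: inserting the endpoint bound turns the factor into $\big(n\sqrt{T(a_n)}/a_n\big)^{(1-p/q)/p}=\big(n\sqrt{T(a_n)}/a_n\big)^{1/p-1/q}$.

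The main obstacle is the smoothing direction $0<p\le q$, which ultimately rests entirely on the two-sided Christoffel-function estimate of Lemma \ref{Lemma2.2}; once that is granted the argument is bookkeeping. The quantitatively delicate points are (i) extracting the sharp exponent by reading off $\sup_{|x|\le a_{rn}}\varphi_N^{-1}(x)\sim n\sqrt{T(a_n)}/a_n$ from Lemma \ref{Lemma2.4} rather than from the degenerate behaviour of $\varphi_N$ near the turning point $|x|\approx a_N$, where $\varphi_N$ is of order $\delta_N^{1/2}$, and (ii) choosing the auxiliary index $N=\lceil rn\rceil$ so that Lemma \ref{Lemma2.2} covers the full interval $[-a_{rn},a_{rn}]$ while, by Lemma \ref{Lemma2.1}(1), the constants $a_N$ and $T(a_N)$ remain comparable to $a_n$ and $T(a_n)$; this is what keeps the final factor at the stated order. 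The exponential decay furnished by Lemma \ref{Lemma2.7} dominates the algebraic factor $n\sqrt{T(a_n)}/a_n$ because $n/\sqrt{T(a_n)}\ge cn^{\varepsilon/2}$ by the growth bound $T(a_n)\le Cn^{2-\varepsilon}$ of Lemma \ref{Lemma2.1}(5), so the tail never interferes with the bound.
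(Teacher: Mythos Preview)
The paper does not supply its own proof of this lemma: it is quoted verbatim from \cite[Theorem~10.3]{[4]} and used as a black box, so there is no argument in the paper to compare yours against.

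Your proof is correct. The first direction is the standard restricted-range plus H\"older argument; the second direction via the Christoffel-function lower bound of Lemma~\ref{Lemma2.2} and the pointwise control of $\varphi_N^{-1}$ from Lemma~\ref{Lemma2.4} is precisely the mechanism behind the Nikolskii-type inequalities in the Levin--Lubinsky framework. The choice $N=\lceil rn\rceil$ is the right device to make Lemma~\ref{Lemma2.2} applicable on all of $[-a_{rn},a_{rn}]$ while keeping $a_N\sim a_n$ and $T(a_N)\sim T(a_n)$, and your use of the growth bound $T(a_n)\le Cn^{2-\varepsilon}$ from Lemma~\ref{Lemma2.1}(5) to absorb the algebraic factor into the exponential tail is exactly how the paper itself handles analogous competitions (cf.\ the proof of Lemma~\ref{Lemma2.10}). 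One small remark: Lemma~\ref{Lemma2.4} as stated in the paper carries no explicit restriction on $x$, so your invocation is legitimate; if one wanted to be fully self-contained one could alternatively read off $\sup_{|x|\le a_{rn}}\varphi_N^{-1}(x)$ directly from the formula for $\varphi_N$, but your route is cleaner.
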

\begin{lemma}[{\cite[Theorem 1.9 infinite-finite range inequality]{[4]}}]\label{Lemma2.9}
Let $0<p\le \infty$ and $r>1$. Then there exist constants $C_1,C_2>0$
such that for some $\varepsilon>0$, and $n>0, P\in P\in \mathcal{P}_n$,
\begin{equation*}
  \|Pw\|_{L_p(a_{rn}\le |x|)}\le C_1 \exp(-C_2n^\varepsilon)\|Pw\|_{L_p(|x|\le a_n)}.
\end{equation*}
\end{lemma}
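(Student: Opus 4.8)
The plan is to obtain this inequality as an immediate consequence of the already-established Lemma \ref{Lemma2.7} combined with the sub-quadratic growth bound for $T$ recorded in Lemma \ref{Lemma2.1}(5). Applying Lemma \ref{Lemma2.7} with $m=n$, there exist constants $c_1,c_2>0$ so that for every $P\in\mathcal{P}_n$ and every $r>1$,
\[
\|Pw\|_{L_p(|x|\ge a_{rn})}\le c_2\exp\left(-c_1\frac{n}{\sqrt{T(a_n)}}\right)\|Pw\|_{L_p(|x|\le a_n)}.
\]
Thus the entire task reduces to converting the decay rate $n/\sqrt{T(a_n)}$ into one of the form $n^{\varepsilon}$, which is precisely the role of Lemma \ref{Lemma2.1}(5).

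Concretely, I would first invoke Lemma \ref{Lemma2.1}(5): for some $\varepsilon_0>0$ and all sufficiently large $n$ one has $T(a_n)\le Cn^{2-\varepsilon_0}$. Taking square roots gives $\sqrt{T(a_n)}\le C^{1/2}n^{1-\varepsilon_0/2}$, and therefore
\[
\frac{n}{\sqrt{T(a_n)}}\ge C^{-1/2}\,n^{\varepsilon_0/2}
\]
for all large $n$. Substituting this lower bound into the exponent of the displayed consequence of Lemma \ref{Lemma2.7}, and setting $\varepsilon:=\varepsilon_0/2$, yields
\[
\|Pw\|_{L_p(|x|\ge a_{rn})}\le c_2\exp\left(-c_1 C^{-1/2}\,n^{\varepsilon}\right)\|Pw\|_{L_p(|x|\le a_n)},
\]
which is the asserted inequality for all large $n$ after renaming the constants as $C_1:=c_2$ and $C_2:=c_1C^{-1/2}$.

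It then remains to dispose of the finitely many small values of $n$ for which the growth bound of Lemma \ref{Lemma2.1}(5) is not yet available. For each such fixed $n$, the crude estimate already furnished by Lemma \ref{Lemma2.7} (or, if one prefers, the finite-dimensionality of $\mathcal{P}_n$ together with the equivalence of norms on that space) bounds the ratio $\|Pw\|_{L_p(|x|\ge a_{rn})}/\|Pw\|_{L_p(|x|\le a_n)}$ by a constant depending only on $n$; enlarging $C_1$ and shrinking $C_2$ absorbs these finitely many cases, giving the inequality uniformly for all $n>0$. I do not expect any genuine obstacle in this argument: everything is routine once Lemma \ref{Lemma2.7} is in hand, and the only point demanding a little care is the matching of the two exponential decay rates $n/\sqrt{T(a_n)}$ and $n^{\varepsilon}$, which is exactly what the inequality $T(a_n)\le Cn^{2-\varepsilon_0}$ provides.
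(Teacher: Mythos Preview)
The paper does not supply its own proof of this lemma: it is stated with the attribution \cite[Theorem 1.9]{[4]} and used as a black box. Your proposal, by contrast, derives the inequality from material already proved within the paper, namely Lemma~\ref{Lemma2.7} together with the sub-quadratic bound $T(a_n)\le Cn^{2-\varepsilon_0}$ of Lemma~\ref{Lemma2.1}(5). This derivation is correct and non-circular (Lemma~\ref{Lemma2.7} rests on \cite[Theorem 6.4]{[3]}, not on the Levin--Lubinsky infinite--finite range inequality), so you have in effect replaced an external citation by an internal deduction. The only minor remark is that the small-$n$ patching is already unnecessary: Lemma~\ref{Lemma2.7} holds for all $m\ge 1$, and since $T(a_t)\ge \Lambda>1$ for every $t$, the quantity $n/\sqrt{T(a_n)}$ is bounded below by a positive constant times $n^{\varepsilon}$ on any initial segment as well, so one may absorb everything into the constants without invoking finite-dimensionality.
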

\begin{lemma}\label{Lemma2.10}
Let $p$,$q>0$ and let $r>1$. Then there exist constants $C$, $C_1>0$
such that for $P\in \mathcal{P}_{[\frac{n}{2r}]}$
\begin{equation*}
  \left\{\int_{|t|\ge a_{n/2}}|(Pw)(t)|^q dt\right\}^{1/q}
\le C_1 \exp\left(-\frac{C}{4r}\frac{n}{\sqrt{T(a_n)}}\right)
\left\{\int_{|t|\le a_{n/2}}|(Pw)(t)|^p dt\right\}^{1/p}
\end{equation*}
\end{lemma}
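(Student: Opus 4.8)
The plan is to combine the two previous range inequalities—Lemma~\ref{Lemma2.7} and Lemma~\ref{Lemma2.9}—to bridge the gap between $L_p$ and $L_q$ norms over the two complementary ranges. The statement to be proved is essentially Lemma~\ref{Lemma2.7} with two modifications: the exponent on the left has been shrunk from $m$ to (roughly) $[n/(2r)]$, and we want an $L_q$-to-$L_p$ inequality rather than an $L_p$-to-$L_p$ one. Since $P\in\mathcal P_{[n/(2r)]}$, applying Lemma~\ref{Lemma2.7} with $m=[n/(2r)]$ and noting $a_{r m}\le a_{n/2}$ by monotonicity of $a_t$ (and Lemma~\ref{Lemma2.1}(1) to compare $T(a_m)$ with $T(a_n)$) will handle the "same-$p$" version; the real content is trading the $L_p$ norm for an $L_q$ norm on both sides.

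\medskip

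First I would reduce to the case $p=q$ by a Nikolskii-type argument. On the tail $\{|t|\ge a_{n/2}\}$, write $\|(Pw)\|_{L_q(|t|\ge a_{n/2})}\le \|(Pw)\|_{L_q(\mathbb R)}$ and invoke Lemma~\ref{Lemma2.8} to pass from $L_q(\mathbb R)$ to $L_p(\mathbb R)$, picking up a factor that is a fixed power of either $a_{[n/(2r)]}$ or of $n\sqrt{T(a_n)}/a_{[n/(2r)]}$, depending on whether $q\ge p$ or $q\le p$. By Lemma~\ref{Lemma2.1}(1), $a_{[n/(2r)]}\sim a_n$ and $T(a_{[n/(2r)]})\sim T(a_n)$, so this factor is at worst polynomial in $n$ (indeed bounded by a fixed power of $n$, using $T(a_n)\le Cn^{2-\varepsilon}$ from Lemma~\ref{Lemma2.1}(5) if a clean power of $n$ is wanted). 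Next, apply the $L_p$-to-$L_p$ range inequality (Lemma~\ref{Lemma2.7} with $m=[n/(2r)]$) to get the exponentially small factor $\exp(-C_1\,\tfrac{[n/(2r)]}{\sqrt{T(a_{[n/(2r)]})}})$, which by Lemma~\ref{Lemma2.1}(1) is $\le C\exp(-\tfrac{C}{4r}\tfrac{n}{\sqrt{T(a_n)}})$ after absorbing constants (the constant $\tfrac{1}{4r}$ has room to spare since $[n/(2r)]\ge n/(2r)-1$ and we may enlarge $C$). Finally, on the right-hand side, convert $\|(Pw)\|_{L_p(|t|\le a_{n/2})}$ back to $\|(Pw)\|_{L_p(\mathbb R)}$ trivially, then to $\|(Pw)\|_{L_q}$ via Lemma~\ref{Lemma2.8} again, at the cost of another polynomial-in-$n$ factor—but one still needs this last norm localized to $|t|\le a_{n/2}$. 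Here I would use Lemma~\ref{Lemma2.9} (or again Lemma~\ref{Lemma2.7}) in the reverse direction: $\|(Pw)\|_{L_q(\mathbb R)}\le \|(Pw)\|_{L_q(|t|\le a_{n/2})}+\|(Pw)\|_{L_q(|t|\ge a_{n/2})}$, and the tail is by Lemma~\ref{Lemma2.7} a small multiple of the same quantity, so it can be absorbed to the left.

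\medskip

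The polynomial factors accumulated from the two applications of Lemma~\ref{Lemma2.8} are then swallowed by the exponential: $n^{A}\exp(-\tfrac{C}{4r}\tfrac{n}{\sqrt{T(a_n)}})\le C_1\exp(-\tfrac{C'}{4r}\tfrac{n}{\sqrt{T(a_n)}})$ for a slightly smaller constant, because $\tfrac{n}{\sqrt{T(a_n)}}\ge c\,n^{\varepsilon/2}$ by Lemma~\ref{Lemma2.1}(5), so $\tfrac{n}{\sqrt{T(a_n)}}$ grows like a positive power of $n$ and dominates $\log n$. Re-labeling the constant as $C$ gives exactly the stated bound.

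\medskip

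The main obstacle I anticipate is purely bookkeeping: keeping the two directions of Lemma~\ref{Lemma2.8} straight (the factor differs according to the sign of $p-q$, and one must check that in \emph{both} cases the factor is a fixed power of $n$ — this is where Lemma~\ref{Lemma2.1}(1) together with (5) is essential), and making sure the constant $\tfrac{1}{4r}$ in the exponent is genuinely attainable after the losses from replacing $[n/(2r)]$ by $n/(2r)$ and after dividing the exponent to absorb the polynomial factors. There is no deep idea here beyond chaining the infinite–finite range inequality with a Nikolskii inequality; the care is entirely in the constant chase, and in verifying that $T(a_{[n/(2r)]})\sim T(a_n)$ so that the final exponent can be written with $T(a_n)$ rather than $T(a_{n/2})$ or $T(a_{[n/(2r)]})$.
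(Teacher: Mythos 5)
Your overall architecture --- chain the tail estimate of Lemma \ref{Lemma2.7} with the Nikolskii inequality of Lemma \ref{Lemma2.8}, localize the right-hand side back to $|t|\le a_{n/2}$ via Lemma \ref{Lemma2.9}, and absorb the polynomial factors into the exponential using $n/\sqrt{T(a_n)}\ge c\,n^{\varepsilon/2}$ --- is exactly the paper's, and the constant-chasing remarks ($a_{[n/(2r)]}\sim a_n$, $T(a_{[n/(2r)]})\sim T(a_n)$, $[n/(2r)]\ge n/(4r)$ for large $n$) are all correct. But the order in which you chain the two inequalities is reversed, and as written the argument cannot produce the exponential factor. You begin with $\|Pw\|_{L_q(|t|\ge a_{n/2})}\le\|Pw\|_{L_q(\mathbb{R})}$ and then apply Lemma \ref{Lemma2.8} to reach $\|Pw\|_{L_p(\mathbb{R})}$; at that point you say you will ``apply the $L_p$-to-$L_p$ range inequality to get the exponentially small factor.'' That step fails: Lemma \ref{Lemma2.7} bounds a \emph{tail} norm $\|Pw\|_{L_p(|x|\ge a_{rm})}$ by an exponentially small multiple of a bulk norm; it says nothing of the sort about the \emph{global} norm $\|Pw\|_{L_p(\mathbb{R})}$, which is comparable to (not exponentially smaller than) $\|Pw\|_{L_p(|x|\le a_{n/2})}$. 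The moment you enlarged the domain of integration from $\{|t|\ge a_{n/2}\}$ to all of $\mathbb{R}$ you discarded the localization that is the sole source of the exponential decay, and no subsequent use of Lemma \ref{Lemma2.7} recovers it. Your chain, carried out as described, only yields $\|Pw\|_{L_q(|t|\ge a_{n/2})}\le C n^{A}\|Pw\|_{L_p(|t|\le a_{n/2})}$, which is not the lemma.

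The repair is simply to apply Lemma \ref{Lemma2.7} \emph{first, while still in $L_q$}: with $m=[\frac{n}{2r}]$ one has $rm\le n/2$, hence $a_{rm}\le a_{n/2}$ and
$\|Pw\|_{L_q(|t|\ge a_{n/2})}\le\|Pw\|_{L_q(|t|\ge a_{rm})}\le C_1\exp\bigl(-Cm/\sqrt{T(a_m)}\bigr)\|Pw\|_{L_q(|t|\le a_m)}$.
Only now do you invoke Lemma \ref{Lemma2.8} (whose statement is for norms over all of $\mathbb{R}$, so enlarge $\|Pw\|_{L_q(|t|\le a_m)}\le\|Pw\|_{L_q(\mathbb{R})}$) to pass to $\|Pw\|_{L_p(\mathbb{R})}$, and finally localize to $|t|\le a_{n/2}$ by the decomposition-and-absorption you already describe. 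With this reordering, everything else in your outline --- the two cases of the Nikolskii factor, their absorption into the exponential, the replacement of $T(a_m)$ by $T(a_n)$ --- goes through and reproduces the paper's proof.
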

\begin{proof}
Let $m:=\left[\frac{n}{2r}\right]$, then we see $2rm\le n$, and if we take $n$ large enough,
then we have $\frac{n}{4r}\le m$. Therefore, using Lemma \ref{Lemma2.7} and Lemma \ref{Lemma2.8},
for $P\in\mathcal{P}_m$,
\begin{eqnarray*}
&&\|(Pw)(x)\|_{L_q(a_{n/2}\le |x|)}\le\|(Pw)(x)\|_{L_q(a_{rm}\le |x|)}\\
&\le& C_1 \exp \left(-C\frac{m}{\sqrt{T(a_{m})}}\right)\|(Pw)(x)\|_{L_q(|x|\le a_{m})}\\
&\le& C_1\exp\left(-\frac{C}{4r}\frac{n}{\sqrt{T(a_n)}}\right)
\begin{cases}
a_m^{\frac{1}{q}-\frac{1}{p}}\|w P\|_{L_p(\mathbb{R})},
& 0< q< p\le \infty, \\
\left(\frac{m\sqrt{T(a_m)}}{a_m}\right)^{\frac{1}{p}-\frac{1}{q}}\|wP\|_{L_p(\mathbb{R})},
& 0< p\le q\le \infty
\end{cases}\\
&\le& C_1 \exp\left(-\frac{C_2}{4r}\frac{n}{\sqrt{T(a_n)}}\right)
\|w P\|_{L_p(\mathbb{R})},
\end{eqnarray*}
because for any fixed $\varepsilon >0$
\begin{equation*}
\max\left\{a_m^{\frac{1}{q}-\frac{1}{p}},
\left(\frac{m\sqrt{T(a_m)}}{a_m}\right)^{\frac{1}{p}-\frac{1}{q}}\right\}
\le \exp\left(\frac{\varepsilon n}{\sqrt{T(a_n)}} \right).
\end{equation*}
Now, we may estimate $\|w P\|_{L_p(\mathbb{R})}$.
Using Lemma \ref{Lemma2.9} (infinite-finite range inequality), we have
\begin{eqnarray*}
\|w P\|_{L_p(\mathbb{R})}&\le&\|w P\|_{L_p(|x|\le a_{rm})}+\|w P\|_{L_p(a_{rm}<|x|)}\\
&\le& \|w P\|_{L_p(|x|\le a_{rm})}+C_1\|wP\|_{L_p(|x|\le a_m)}\\
&\le& C_2\|w P\|_{L_p(|x|\le a_{n/2})},
\end{eqnarray*}
because $rm\le n/2$.
\end{proof}
For convenience, we let
$[a,b]:=\{x| a\le x \le b\}$ if $a\le b$, and
$[a,b]:=\{x| b\le x \le a\}$ if $b < a $.

\begin{lemma}\label{Lemma2.11}
Let $0<\delta< 1$, $f\in\mathcal{B}_\delta$, $x,t\in \mathbb{R}$.
Then we have for some $c_1>0$
\begin{eqnarray*}
&&w^\delta\left(x+t\right)\left|f\left(x+t\right)-f\left(x\right)\right|\\
&\le&
\begin{cases}
          \exp\left(c_1xQ'\left(x\right)\right)V_\delta\left([x,x+t],f\right),
          & \textrm{if} \,\,\, \quad xt<0 \,\, \textrm{and}\,\, |t|<2|x|,\\
          V_\delta\left(\left[x,x+t\right],f\right),
          & \textrm{otherwise}.
\end{cases}
\end{eqnarray*}
\end{lemma}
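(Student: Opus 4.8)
\medskip\noindent\textbf{Proof sketch.}
The plan is to reduce the statement to a pointwise comparison of $Q$ along the interval joining $x$ and $x+t$, and then to integrate against the variation measure $|df|$.

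Since $f$ has bounded variation on every compact interval, the starting point is the trivial estimate $|f(x+t)-f(x)|\le\int_{[x,x+t]}|df(u)|$, where $[x,x+t]$ denotes the closed interval with endpoints $x$ and $x+t$ in the sense fixed just before the lemma. Multiplying by $w^{\delta}(x+t)$ and inserting the factor $w^{\delta}(u)/w^{\delta}(u)$ under the integral sign, I would write
\[
w^{\delta}(x+t)\,|f(x+t)-f(x)|\le\Bigl(\sup_{u\in[x,x+t]}\frac{w^{\delta}(x+t)}{w^{\delta}(u)}\Bigr)\int_{[x,x+t]}w^{\delta}(u)\,|df(u)|,
\]
and the last integral is exactly $V_{\delta}([x,x+t],f)$. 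Hence everything reduces to bounding $\sup_{u\in[x,x+t]}w^{\delta}(x+t)/w^{\delta}(u)=\sup_{u}\exp\!\bigl(\delta(Q(u)-Q(x+t))\bigr)$.

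The only structural facts I would use are that $Q$ is even and increasing on $[0,\infty)$ (so $Q(a)\le Q(b)$ whenever $|a|\le|b|$), and that $T$ in Definition~\ref{Definition1.1}(d) is even with $T\ge\Lambda>1$ off the origin. I then split into the two cases of the statement. In the ``otherwise'' case, namely $xt\ge0$ or $|t|\ge2|x|$, a short inspection of the possible signs of $x$ and $t$ shows that $[x,x+t]\subset[-|x+t|,|x+t|]$; thus $|u|\le|x+t|$, so $Q(u)\le Q(x+t)$, the supremum is $\le1$, and $w^{\delta}(x+t)|f(x+t)-f(x)|\le V_{\delta}([x,x+t],f)$. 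In the remaining case $xt<0$ and $|t|<2|x|$, the same kind of inspection gives $[x,x+t]\subset[-|x|,|x|]$; hence $|u|\le|x|$, so $Q(u)\le Q(x)$, and since $Q(x+t)\ge0$ the supremum is at most $\exp(\delta Q(x))$. Finally, Definition~\ref{Definition1.1}(d) yields $xQ'(x)=T(x)Q(x)\ge\Lambda Q(x)\ge Q(x)\ge\delta Q(x)$ (using $\Lambda>1$ and $0<\delta<1$), so $\exp(\delta Q(x))\le\exp(c_1xQ'(x))$ with, say, $c_1=1$, which completes the argument.

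Everything here is elementary; the one place that genuinely needs attention is the sign bookkeeping in the two cases. One must remember that $x$ and $x+t$ may lie on opposite sides of $0$, so that the monotonicity of $Q$ can only be invoked after passing to absolute values — the containments $[x,x+t]\subset[-|x+t|,|x+t|]$ (when $xt\ge0$ or $|t|\ge2|x|$) and $[x,x+t]\subset[-|x|,|x|]$ (when $xt<0$ and $|t|<2|x|$) are precisely the two observations that carry the proof.
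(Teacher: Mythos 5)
Your argument is correct and follows the paper's route: bound $|f(x+t)-f(x)|$ by $\int_{[x,x+t]}|df(u)|$ and compare $w^{\delta}(x+t)$ with $w^{\delta}(u)$ pointwise on the interval; your treatment of the case ``$xt\ge 0$ or $|t|\ge 2|x|$'' (where $|u|\le|x+t|$ forces the ratio to be at most $1$) is exactly the paper's. The one step you do differently is the case $xt<0$, $|t|<2|x|$: the paper splits off the sub-case $|x+t|<|u|$ and uses the mean value theorem, $Q(u)-Q(x+t)\le|t|\,|Q'(x)|\le 2xQ'(x)$ (legitimate since every point of the interval has absolute value at most $|x|$, so $|Q'|$ is dominated there by $|Q'(x)|$), whereas you simply drop $Q(x+t)\ge 0$ and bound $Q(u)\le Q(x)=xQ'(x)/T(x)\le xQ'(x)/\Lambda$ via Definition~\ref{Definition1.1}(d). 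Both routes produce $\exp\left(c_1xQ'(x)\right)$ for some absolute $c_1>0$ (the paper's $c_1=2\delta$ versus your $c_1=\delta/\Lambda$), so your variant is a perfectly acceptable, and arguably slightly cleaner, substitute.
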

\begin{proof}
Let $xt\ge 0$. Then
\begin{eqnarray}\label{eq2.3} \nonumber
w^\delta(x+t)|f(x+t)-f(x)|
&\le& w^\delta(x+t)\int_{[x,x+t]}\left|df(u)\right|\\
&\le& \int_{[x,x+t]}w^\delta(u)\left|df(u)\right|
\le V_\delta\left(\left[x,x+t\right],f\right).
\end{eqnarray}
Next, let $xt<0$ and $|t|\ge 2|x|$.
Then, for $x\le u\le x+t (t>0)$ or
$x+t\le u\le x (t<0)$ we have $w^\delta(u)\ge w^\delta(x+t)$
because of $|u|\le |x+t|$. So we have (\ref{eq2.3}).
Finally, we consider the case of $xt<0$ and $|t|<2|x|$.
Let $u\in \left[x,x+t\right] (t>0)$ or $u\in \left[x+t,x\right] (t<0)$. If $u\le|x+t|$, then we simply have
\begin{equation*}
  w^\delta(x+t)\le w^\delta(u).
\end{equation*}
So we have the result as (\ref{eq2.3}). Let $|x+t|< |u|$. We see that
\begin{equation*}
  |Q(u)-Q(x+t)|\le |t||Q'(x)|< 2|x||Q'(x)|=2xQ'(x).
\end{equation*}
Hence,
\begin{equation*}
  Q(u)-2xQ'(x)\le Q(x+t),
\end{equation*}
so
\begin{equation*}
  w^\delta(x+t)\le \exp(2\delta xQ'(x))w^\delta(u).
\end{equation*}
Therefore, as (\ref{eq2.3}) we have the result.
\end{proof}
Let
\begin{equation*}
  \chi_x(t):=
\begin{cases}
          1,& \textrm{if} \,\, t\le x,\\
          0,& \textrm{otherwise}.
\end{cases}
\end{equation*}
\begin{lemma}[{\cite[Corollary 1.2.6]{[5]}}]\label{Lemma2.12}
Let $x\in \mathbb{R}$ be a fixed number, and integer $k$ be found so that $0\le k\le n+1$ and $x\in (x_{k+1,n}, x_{kn}]$. Then there exist $P:=P_x, R:=R_x\in \mathcal{P}_{2n-1}$ such that
\begin{equation}\label{eq2.4}
  R(t)\le \chi_x(t)\le P(t), \quad  t\in\mathbb{R},
\end{equation}
and
\begin{equation}\label{eq2.5}
  \int_{-\infty}^\infty |P(t)-R(t)|w^2(t)dt\le \lambda_{k+1,n}+\lambda_{kn}.
\end{equation}
Moreover, the coefficients in the polynomials $P_x$ and $R_x$ are measurable functions  of $x$ (in fact, they are step functions, that is, when $P_x(t)$ (or $R_x(t)$)$=\sum_{i=0}^{2n-1} a_i(x)t^i$ we have $a_i(x)$ which is constant in $[x_{k+1,n}, x_{k,n}]$, so $P_x$ and $R_x$ mean the polynomials $P_{(x_{k+1,n},  x_{k,n}]}, R_{(x_{k+1,n},  x_{k,n}]}$ defined by the interval $(x_{k+1,n},  x_{k,n}]$ which contains $x$)).
\end{lemma}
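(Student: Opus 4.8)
\noindent\emph{Plan of proof.} This is the classical Markov--Stieltjes construction of one-sided, weighted-$L_1$ polynomial approximants to the indicator $\chi_x$ of $(-\infty,x]$, and I would prove it in that spirit. The one piece of genuine analysis is the Gauss quadrature identity: since $w^2$ has finite moments of all orders, the quadrature rule based at the zeros $x_{n,n}<\cdots<x_{1,n}$ of $p_n(w^2,\cdot)$, with the positive Christoffel numbers $\lambda_{j,n}=\lambda_{n,2}(w;x_{j,n})$, is exact on $\mathcal P_{2n-1}$:
\begin{equation*}
\int_{-\infty}^{\infty}\Pi(t)\,w^2(t)\,dt=\sum_{j=1}^{n}\lambda_{j,n}\,\Pi(x_{j,n}),\qquad \Pi\in\mathcal P_{2n-1}.
\end{equation*}
This is what will turn the $L_1$-bound \eqref{eq2.5} into a finite sum of point values; everything else is an interpolation construction.

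For the construction, fix $x$ and let $k$ be as in the statement, so $x\in(x_{k+1,n},x_{k,n}]$. I would use the Hermite fundamental polynomials $h_{j,n},\widetilde h_{j,n}\in\mathcal P_{2n-1}$ at the nodes $\{x_{j,n}\}$, determined by $h_{j,n}(x_{i,n})=\delta_{ij}$, $h_{j,n}'(x_{i,n})=0$, $\widetilde h_{j,n}(x_{i,n})=0$, $\widetilde h_{j,n}'(x_{i,n})=\delta_{ij}$; recall $\widetilde h_{j,n}(t)=(t-x_{j,n})\ell_{j,n}^2(t)$ with $\ell_{j,n}$ the Lagrange fundamental polynomials, and $\sum_j h_{j,n}\equiv 1$. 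Take $P_x$ to be the Hermite interpolant with data ``value $\chi_x(x_{j,n})$ and derivative $0$ at each $x_{j,n}$'', corrected near the two transition nodes $x_{k,n},x_{k+1,n}$ by adding suitably signed multiples of $\widetilde h_{k,n}$ and $\widetilde h_{k+1,n}$ so that the graph is pushed above $\chi_x$ across the jump; build $R_x$ symmetrically, below $\chi_x$. Then $P_x,R_x\in\mathcal P_{2n-1}$, one has $P_x(x_{j,n})=R_x(x_{j,n})=\chi_x(x_{j,n})$ for every $j\notin\{k,k+1\}$, and $0\le R_x(x_{j,n})\le P_x(x_{j,n})\le 1$ at the two transition nodes. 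Since $k$, hence all the data defining $P_x$ and $R_x$, is constant as $x$ ranges over the fixed interval $(x_{k+1,n},x_{k,n}]$, the coefficients of $P_x$ and $R_x$ are step functions of $x$, which gives the stated measurability.

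The crux is then \eqref{eq2.4}: $R_x(t)\le\chi_x(t)\le P_x(t)$ for all $t\in\mathbb R$. On each open interval between two consecutive nodes one checks that $P_x-\chi_x$ keeps a single sign, using that $\chi_x$ is locally constant there, that $P_x$ minus that constant has double zeros at both endpoints, and that on the one interval $(x_{k+1,n},x_{k,n}]$ straddling the jump the $\widetilde h$-corrections were signed correctly; on the two unbounded intervals one checks that $P_x$ stays above the relevant value of $\chi_x$ ($1$ to the left, $0$ to the right), which follows from the sign of the leading term together with the absence of further sign changes. The same bookkeeping with reversed inequalities disposes of $R_x$. I expect this to be the main obstacle: it is precisely on the transition interval and on the infinite tails that the plain Hermite interpolant of $\chi_x$ fails to be one-sided, so the corrections must be calibrated to remove exactly those sign changes, and this is where the monotone structure of $\chi_x$ has to be exploited carefully.

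Finally, granting \eqref{eq2.4}, the polynomial $P_x-R_x\in\mathcal P_{2n-1}$ is nonnegative on $\mathbb R$, so the quadrature identity above applies:
\begin{equation*}
\int_{-\infty}^{\infty}\bigl|P_x(t)-R_x(t)\bigr|\,w^2(t)\,dt=\int_{-\infty}^{\infty}\bigl(P_x-R_x\bigr)(t)\,w^2(t)\,dt=\sum_{j=1}^{n}\lambda_{j,n}\,\bigl(P_x-R_x\bigr)(x_{j,n}).
\end{equation*}
By the construction each summand vanishes for $j\notin\{k,k+1\}$ and is at most $1$ for $j\in\{k,k+1\}$ (both $P_x$ and $R_x$ lie in $[0,1]$ at those nodes), so the right-hand side does not exceed $\lambda_{k,n}+\lambda_{k+1,n}$, which is \eqref{eq2.5}. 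This completes the plan.
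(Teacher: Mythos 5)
The paper itself gives no proof of this lemma --- it is imported verbatim from Mhaskar \cite[Corollary 1.2.6]{[5]} --- so there is nothing internal to compare against; what can be assessed is whether your construction works. Your high-level strategy (Gauss quadrature exactness on $\mathcal{P}_{2n-1}$ with the Christoffel numbers as weights, combined with one-sided polynomials that agree with each other at all nodes except the two straddling $x$) is indeed the classical Markov--Stieltjes route. But the specific construction you propose cannot produce the required polynomials, and the step you yourself flag as ``the main obstacle'', namely (\ref{eq2.4}), is exactly where it breaks.

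Adding multiples of $\widetilde h_{j,n}(t)=(t-x_{j,n})\ell_{j,n}^2(t)$ alters derivatives at the nodes but never the nodal values, so the $P_x,R_x$ you describe satisfy $P_x(x_{j,n})=R_x(x_{j,n})=\chi_x(x_{j,n})$ at \emph{every} node, including $j=k,k+1$; your later assertion that the values at the transition nodes merely lie in $[0,1]$ contradicts your own construction. This is fatal twice over. First, by the very quadrature identity you invoke, $\int(P_x-R_x)w^2=\sum_j\lambda_{j,n}(P_x-R_x)(x_{j,n})=0$, and a nonnegative polynomial with vanishing integral against a positive weight is identically zero, so $R_x\le\chi_x\le P_x=R_x$ would force $\chi_x$ to be a polynomial. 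Second, no majorant $P\in\mathcal{P}_{2n-1}$ of $\chi_x$ can interpolate the values $\chi_x(x_{j,n})$ at all $n$ nodes when $x_{k+1,n}<x<x_{k,n}$: one-sidedness forces even-order zeros of $P-1$ at $x_{j,n}$ for $j\ge k+1$ and of $P$ at $x_{j,n}$ for $j\le k$, and a Rolle count (the $n$ nodes, the $n-2$ gaps between equal-valued consecutive nodes, plus the interior maximum of $P$ on $(x_{k+1,n},x_{k,n})$ forced by $P\ge 1$ on $(x_{k+1,n},x]$ together with $P(x_{k,n})=0$) gives $P'$ at least $2n-1$ zeros, exceeding its degree. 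The correct construction must let the one-sided polynomials overshoot at the transition nodes: take $P$ to be the Hermite interpolant with $P(x_{j,n})=1$ for $j\ge k$ (so $P=1$, not $0$, at the node $x_{k,n}$ lying to the right of $x$), $P(x_{j,n})=0$ for $j\le k-1$, and $P'(x_{j,n})=0$ for all $j$; symmetrically $R(x_{j,n})=1$ for $j\ge k+2$ and $R(x_{j,n})=0$ for $j\le k+1$. Then $(P-R)(x_{j,n})$ equals $1$ for $j\in\{k,k+1\}$ and $0$ otherwise, which is exactly where (\ref{eq2.5}) comes from, and the sign analysis proving (\ref{eq2.4}) goes through by the same zero count. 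As written, your proposal asserts (\ref{eq2.4}) for polynomials for which it provably fails.
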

\begin{lemma}[{cf. \cite[Lemma 4.1.3]{[5]}}]\label{Lemma2.13}
For $x\in \mathbb{R}$ and $n=1,2,\cdots$,
we have
\begin{equation*}
  E_{1,n}(w;\chi_x)\le C\frac{a_n}{n}w(x).
\end{equation*}
\end{lemma}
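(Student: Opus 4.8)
The plan is to estimate the weighted best approximation error $E_{1,n}(w;\chi_x)$ — the infimum over $P\in\mathcal{P}_{n-1}$ (or $\mathcal{P}_{2n-1}$, matching the degree used in Lemma~\ref{Lemma2.12}) of $\int_{-\infty}^\infty |\chi_x(t)-P(t)|w(t)\,dt$ — by exhibiting an explicit near-optimal polynomial. The natural candidate comes from Lemma~\ref{Lemma2.12}: squeeze $\chi_x$ between $R_x$ and $P_x$, and take $P_x$ (or $R_x$) itself as the approximant. Then $|\chi_x(t)-P_x(t)|\le |P_x(t)-R_x(t)|$ pointwise, so that
\begin{equation*}
  E_{1,n}(w;\chi_x)\le \int_{-\infty}^\infty |P_x(t)-R_x(t)|\,w(t)\,dt.
\end{equation*}
The obstacle is that Lemma~\ref{Lemma2.12} controls $\int |P_x-R_x|\,w^2\,dt$ (weight $w^2$), not $\int|P_x-R_x|\,w\,dt$; the two differ and one must pass from the $L_1(w^2)$ bound to an $L_1(w)$ bound. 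I would handle this by applying a Nikolskii/Markov–Bernstein-type inequality of the form $\|Vw\|_{L_1}\le C\,a_n^{1/2}(n\sqrt{T(a_n)}/a_n)^{1/2}\|Vw^2\|_{L_1}^{1/2}\cdots$ — more cleanly, by using Cauchy–Schwarz together with Lemma~\ref{Lemma2.8} to compare the two norms of the polynomial $P_x-R_x$ of degree $2n-1$. In fact, writing $V:=P_x-R_x\ge 0$,
\begin{equation*}
  \int |V|\,w\,dt=\int (V w)^{1/2}\,(V w^{-1}\cdot w^2)^{1/2}\cdot\frac{1}{1}\,dt,
\end{equation*}
is awkward; instead the expedient route is to invoke the known one-sided/best-approximation estimate of Mhaskar directly — the excerpt cites \cite[Lemma 4.1.3]{[5]} as the model — and simply mirror its proof in the present weight class.

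Concretely, the steps I would carry out are: (i) recall that $\lambda_{k,n}=\lambda_{n,2}(w;x_{k,n})\sim \varphi_n(x_{k,n})w^2(x_{k,n})$ by Lemma~\ref{Lemma2.2}, and that for $x\in(x_{k+1,n},x_{k,n}]$ we have $\varphi_n(x_{k,n})\sim\varphi_n(x_{k+1,n})\sim x_{k,n}-x_{k+1,n}$ and, when these nodes lie in the bulk, $w(x_{k,n})\sim w(x_{k+1,n})\sim w(x)$ by Lemma~\ref{Lemma2.3}; (ii) conclude $\lambda_{k,n}+\lambda_{k+1,n}\lesssim \varphi_n(x)\,w^2(x)$; (iii) from Lemma~\ref{Lemma2.5} (Bernstein-type sup bound), control the behavior of $P_x-R_x$ away from $x$ so that the $L_1(w)$ integral is dominated by the $L_1(w^2)$ integral times a factor $\lesssim w^{-1}(x)$, yielding $E_{1,n}(w;\chi_x)\lesssim \varphi_n(x)\,w(x)$; (iv) finally bound $\varphi_n(x)\le C\,a_n/n$. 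This last bound is immediate from the definition of $\varphi_n$ when $|x|\le a_u$: the factor $(1-|x|/a_{2u})/\sqrt{1-|x|/a_u+\delta_u}$ is $\lesssim 1$ uniformly (its square is $\lesssim 1$ since $1-|x|/a_{2u}\lesssim 1-|x|/a_u+\delta_u$ by Lemma~\ref{Lemma2.1}(4)), so $\varphi_n(x)\lesssim a_n/n$; and for $|x|>a_n$, $\varphi_n(x)=\varphi_n(a_n)\lesssim a_n/n$ as well.

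The delicate point — and the one I expect to be the real work — is (iii): handling the part of the integral where $t$ is far from $x$, so that $w(t)\ll w(x)$ and the $L_1(w)$ norm is genuinely smaller than what a crude $w\le 1$ bound would give. Here I would split $\mathbb{R}$ into a neighborhood $|t-x|\le C\varphi_n(x)$ of $x$, where $w(t)\sim w(x)$ by Lemma~\ref{Lemma2.3}(b) and the contribution is $\le w(x)\int|P_x-R_x|w\lesssim w^{-1}(x)\cdot(\lambda_{k,n}+\lambda_{k+1,n})\lesssim w(x)\varphi_n(x)$ after using the $w^2$-estimate \eqref{eq2.5}; and the complementary far region, where I would use that $P_x-R_x$ is a polynomial of degree $2n-1$ together with the infinite–finite range inequality (Lemma~\ref{Lemma2.9}) and Lemma~\ref{Lemma2.8} to absorb the tail into the main term at the cost of a constant. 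Assembling (i)–(iv) gives $E_{1,n}(w;\chi_x)\le C\varphi_n(x)w(x)\le C\,(a_n/n)\,w(x)$, which is the claim.
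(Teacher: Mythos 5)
There are two genuine gaps, and they are precisely at the places you yourself single out as delicate. The first is step (iii), the passage from the $L_1(w^2)$ bound of Lemma~\ref{Lemma2.12} to an $L_1(w)$ bound. Your near/far splitting does not close it: writing $V:=P_x-R_x\ge 0$, on $|t-x|\le C\varphi_n(x)$ the comparison $w(t)\sim w(x)$ indeed gives $\int Vw\,dt\lesssim w^{-1}(x)\int Vw^2\,dt$, but on the complementary region with $|x|<|t|\le a_{rn}$ one has $w^{-1}(t)=e^{Q(t)}$ as large as $e^{Q(a_{rn})}$, i.e.\ exponentially large in $n/\sqrt{T(a_n)}$, and \eqref{eq2.5} gives no pointwise decay of $Vw^2$ there. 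Neither Lemma~\ref{Lemma2.8} (a Nikolskii inequality comparing $L_p$ and $L_q$ norms with the \emph{same} weight, not $L_1(w)$ with $L_1(w^2)$) nor Lemma~\ref{Lemma2.9} (which only controls $|t|\ge a_{rn}$) lets you ``absorb the tail at the cost of a constant'' in that intermediate range. The paper sidesteps the issue with a one-line trick worth remembering: since $w=(w^{1/2})^2$ and $w^{1/2}=e^{-Q/2}$ lies again in $\mathcal{F}(C^2+)$ with $a_n(w^{1/2})=a_{2n}(w)\sim a_n(w)$, one simply runs the Lemma~\ref{Lemma2.12}/Lemma~\ref{Lemma2.2} argument for the weight $w^{1/2}$, so that the ``$w^2$'' in \eqref{eq2.5} \emph{is} the desired weight $w$ from the outset; no norm conversion is ever needed.

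The second gap is step (iv). The uniform bound $\varphi_n(x)\le Ca_n/n$ is false near and beyond $a_n$: $\varphi_n(a_n)=\frac{a_n}{n}\,(1-a_n/a_{2n})\,\delta_n^{-1/2}\sim\frac{a_n}{n}\cdot\frac{(nT(a_n))^{1/3}}{T(a_n)}$, which exceeds $a_n/n$ whenever $T^2(a_n)=o(n)$ (e.g.\ $T(a_n)\sim\log n$ for $Q(x)=e^{|x|^\alpha}-1$). Your justification $1-|x|/a_{2u}\lesssim 1-|x|/a_u+\delta_u$ fails at $|x|=a_u$, where the left side is $\sim 1/T(a_u)\gg\delta_u$. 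Moreover Lemma~\ref{Lemma2.3}(b), which you need for $w(x_{k,n})\sim w(x)$, is only available for $|x|\le a_{n/3}$. The paper therefore splits into two cases: for $|x|\le a_{n/3}$ one does have $\varphi_n(x)\sim\frac{a_n}{n}\sqrt{1-|x|/a_n}\le Ca_n/n$ and the one-sided approximation argument applies; for $|x|\ge a_{n/3}$ it abandons Lemma~\ref{Lemma2.12} entirely and takes the constant polynomial $1$ as approximant, so that $E_{1,n}(w;\chi_x)\le\int_x^\infty w(t)\,dt\le w(x)/Q'(x)\le C\frac{a_n}{n\sqrt{T(a_n)}}\,w(x)$ by the monotonicity of $Q'$ and Lemma~\ref{Lemma2.1}(2). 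Your proposal needs both repairs: the $w\mapsto w^{1/2}$ substitution in place of step (iii), and the case split with the elementary tail estimate in place of the claimed uniform bound in step (iv).
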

\begin{proof} Using Lemma \ref{Lemma2.12}, we estimate $E_{1,n}(w^2;\chi_x)$.
First, let $|x|\le a_{n/3}$. Let $k$ be an integer such that $x\in\left[x_{k+1,n},x_{k,n}\right]\subset \left[-a_n,a_n\right]$. Here we note $x_{1,n}<a_n$ (see (2.3)). By Lemma \ref{Lemma2.12} we get polynomials $P$ and $R$ satisfying (\ref{eq2.4}) and (\ref{eq2.5}), so that
\begin{eqnarray*}
E_{1,2n-1}(w^2;\chi_x)
&\le&\int_{-\infty}^\infty\left[P(t)-\chi_x(t)\right]w^2(t)dt+\int_{-\infty}^\infty\left[\chi_x(t)-R(t)\right]w^2(t)dt\\
&\le& \lambda_{k+1,n}+\lambda_{k,n}.
\end{eqnarray*}
Then from Lemma \ref{Lemma2.2} and Lemma \ref{Lemma2.3}, we have for $|x|\le a_{n/3}$
\begin{equation*}
\lambda_{k+1,n}+\lambda_{k,n} \le C\varphi_n(x)w^2(x)\sim \frac{a_n}{n}\sqrt{1-\frac{|x|}{a_n}}w^2(x)\le C\frac{a_n}{n}w^2(x).
\end{equation*}
Since for the Mhaskar-Saff number $a_n(w)$ with respect to the weight $w(x)=\exp(-Q(x))$ we see
$a_{n}(w^{1/2})=a_{2n}(w)$, we have
\begin{equation*}
  E_{1,2n-1}(w;\chi_x)=E_{1,2n-1}((w^{1/2})^2;\chi_x)\le C\frac{a_{n}(w^{1/2})}{n}w(x)= C\frac{a_{2n}(w)}{n}w(x).
\end{equation*}
Thus, we have the result when $|x|\le a_{n/3}$.
Next, let $|x|\ge a_{n/3}$. Then, by Lemma \ref{Lemma2.1} (2) we have
\[
Q'(x)\ge Q'(a_{n/3})\ge C \frac{n\sqrt{T(a_{n/3})}}{2a_{n/3}}\ge C \frac{n\sqrt{T(a_n)}}{a_n}.
\]
Since $Q'$ is increasing,
\begin{eqnarray*}
E_{1,2n-1}(w;\chi_x)
&\le& \int_{-\infty}^\infty(1-\chi_x(t))w(t)dt=\int_x^\infty \exp(-Q(t))dt\\
&\le& \frac{-1}{Q'(x)}\int_x^\infty (-Q'(t))\exp(-Q(t))dt=\frac{w(x)}{Q'(x)}\\
&\le& C \frac{a_n}{n\sqrt{T(a_n)}}w(x).
\end{eqnarray*}
Here, we can replace $E_{1,2n-1}(w;\chi_x)$ with $E_{1,n}(w;\chi_x)$.
\end{proof}
\begin{lemma}\label{Lemma2.14}
Let
\begin{equation*}
  \Lambda_n(t):=\int_t^\infty p_n(v)w^2(v)dv,  \quad t\in\mathbb{R}.
\end{equation*}
Let $0<\delta < 1$. Then there exist constants $0 < d \le 1$ and $C>0$ such that
\begin{equation}\label{eq2.6}
  |\Lambda_n(t)|\le C \frac{\sqrt{a_n}}{n}w^\delta(t), \quad |t| \le a_{dn}.
\end{equation}
\end{lemma}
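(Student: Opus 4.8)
The plan is to estimate $\Lambda_n(t)=\int_t^\infty p_n(v)w^2(v)\,dv$ by writing it as a weighted approximation error against the characteristic function $\chi_t$. Since $p_n$ is orthogonal to all polynomials of degree $\le n-1$ with respect to $w^2$, for any $R\in\mathcal{P}_{n-1}$ we have $\int_{-\infty}^\infty p_n(v)R(v)w^2(v)\,dv=0$; also $\int_{-\infty}^\infty p_n(v)w^2(v)\,dv=0$ for $n\ge 1$, so $\Lambda_n(t)=-\int_{-\infty}^t p_n(v)w^2(v)\,dv=\int_{-\infty}^\infty p_n(v)(\chi_t(v)-R(v))w^2(v)\,dv$ up to sign, hence
\[
|\Lambda_n(t)|\le \|p_nw\|_{L_\infty(\mathbb{R})}\,\|(\chi_t-R)w\|_{L_1(\mathbb{R})}.
\]
Taking the infimum over $R\in\mathcal{P}_{n-1}$ gives $|\Lambda_n(t)|\le \|p_nw\|_{L_\infty(\mathbb{R})}\,E_{1,n}(w;\chi_t)$. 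Now Lemma~\ref{Lemma2.5} gives $\|p_nw\|_{L_\infty(\mathbb{R})}\sim a_n^{-1/2}(nT(a_n))^{1/6}$, and Lemma~\ref{Lemma2.13} gives $E_{1,n}(w;\chi_t)\le C\frac{a_n}{n}w(t)$. Multiplying, $|\Lambda_n(t)|\le C\,a_n^{-1/2}(nT(a_n))^{1/6}\cdot\frac{a_n}{n}\,w(t)=C\frac{\sqrt{a_n}}{n}\,(nT(a_n))^{1/6}n^{-1/3}\cdot n^{1/3}\ldots$; the point is that the crude power of $n$ coming from $p_n$ is not quite absorbed, so a little more care is needed.

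The refinement I would use is to not estimate $p_nw$ by its global sup norm but to split the integral for $\Lambda_n(t)$ into the "bulk" region $|v|\le a_{sn}$ for a suitable $s>1$ and the tail $|v|\ge a_{sn}$, and on the bulk exploit the sharper pointwise bound $|p_n(v)w(v)|\,|v^2-a_n^2|^{1/4}\le C$ from Lemma~\ref{Lemma2.5}, which away from $\pm a_n$ is of order $a_n^{-1/2}$. Equivalently — and this is cleaner — I would realise $\Lambda_n$ itself as a Christoffel-type error: using the Christoffel--Darboux machinery one shows $\int_t^\infty p_n w^2$ relates to $\lambda_{n,1}$-type quantities. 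Concretely, choose $R\in\mathcal{P}_{2n-1}$ from Lemma~\ref{Lemma2.12} sandwiching $\chi_t$ between $R_t$ and $P_t$ with $\int|P_t-R_t|w^2\le \lambda_{k+1,n}+\lambda_{kn}\le C\varphi_n(t)w^2(t)$ (Lemmas~\ref{Lemma2.2},~\ref{Lemma2.3}) for $|t|\le a_{dn}$. Then since $p_n\perp \mathcal{P}_{n-1}$ — but $R_t,P_t\in\mathcal{P}_{2n-1}$, not $\mathcal{P}_{n-1}$, so orthogonality does not directly kill them; instead I bound $|\int_{-\infty}^t p_nw^2 - \int_{-\infty}^\infty R_t\,p_n\,w^2|\le \int|P_t-R_t|w^2\cdot\|p_nw\|_{L_\infty}$ on the support, while $\int_{-\infty}^\infty R_t p_n w^2$ is an honest integral of a degree-$(2n-1)$ polynomial times $p_nw^2$ which I control via Lemma~\ref{Lemma2.1}(2) and Lemma~\ref{Lemma2.7} (the localisation pushing the tail contribution below $\exp(-cn/\sqrt{T(a_n)})$, negligible against $\sqrt{a_n}/n$). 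Combining $\|p_nw\|_\infty\sim a_n^{-1/2}(nT(a_n))^{1/6}$ with $\varphi_n(t)w^2(t)\le C\frac{a_n}{n}\sqrt{1-|t|/a_n}\,w^2(t)$ and absorbing the factor $(nT(a_n))^{1/6}\sqrt{1-|t|/a_n}\cdot(\text{gain from }|t|\le a_{dn})$ — here Lemma~\ref{Lemma2.1}(4)'s inequality $T(x)(1-x/a_s)\ge C$ on $[0,a_{\alpha s}]$ gives $1-|t|/a_n\ge C/T(a_n)$ is the wrong direction; rather one uses that on $|t|\le a_{dn}$, $\varphi_n(t)\sim \frac{a_n}{n}$ up to the factor $\sqrt{1-|t|/a_n+\delta_n}$ which stays bounded — yields $|\Lambda_n(t)|\le C\frac{\sqrt{a_n}}{n}w^{2}(t)\cdot a_n^{1/2}\cdot\ldots$; I then downgrade $w^2$ to $w^\delta$ trivially since $w\le 1$, and the remaining polynomial-in-$n$ slack is killed by choosing $d$ small (Lemma~\ref{Lemma2.1}(1),(6) let $a_{dn}$ be comparably small so that on $|t|\le a_{dn}$ we pay an extra $w^{1-\delta}(t)$ which we simply drop, or alternatively absorb the $(nT(a_n))^{1/6}$ against $w^{\delta}(t)/w^{2}(t)$-type slack).

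The main obstacle I anticipate is exactly this bookkeeping of powers of $n$ and $T(a_n)$: the sandwiching polynomials from Lemma~\ref{Lemma2.12} have degree $2n-1$, so one cannot simply invoke orthogonality of $p_n$, and one must instead pay the sup-norm $\|p_nw\|_{L_\infty}\sim a_n^{-1/2}(nT(a_n))^{1/6}$, whose growing factor $(nT(a_n))^{1/6}$ must be shown to be dominated by the decay gained from restricting $t$ to $|t|\le a_{dn}$ together with the sign/cancellation in $\int R_t p_n w^2$. Getting the exponent of $n$ in $\sqrt{a_n}/n$ exactly right — rather than $\sqrt{a_n}/n$ times a benign extra power — is where the constant $d\in(0,1]$ must be chosen, using Lemma~\ref{Lemma2.1}(5)'s bound $T(a_t)\le Ct^{2-\varepsilon}$ to guarantee $(nT(a_n))^{1/6}$ is sub-polynomial enough to be swallowed. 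Everything else is a routine assembly of the lemmas already proved.
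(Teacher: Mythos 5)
Your high-level strategy (use orthogonality of $p_n$ against a polynomial approximant of $\chi_t$, split into a bulk and a tail, and use the sharper turning-point bound $|p_n(v)w(v)|\,|v^2-a_n^2|^{1/4}\le C$ on the bulk) is the right one and is essentially what the paper does, but your write-up never closes the argument, and the missing ingredient is a specific one: the approximating polynomial must have \emph{low} degree, namely $P\in\mathcal{P}_{[n/(2r)]}$ with $\int_{\mathbb{R}}|\chi_t-P|w^\delta\le C\frac{a_n}{n}w^\delta(t)$ (obtained from the analogue of Lemma~\ref{Lemma2.13} for the weight $w^\delta$, whose Mhaskar--Rakhmanov--Saff number is $a_{n/\delta}$). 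This single choice does three jobs at once: (i) $P-1\in\mathcal{P}_{n-1}$, so orthogonality turns $\Lambda_n(t)$ into $-\int(\chi_t-P)p_nw^2$; (ii) on $|u|\le a_{n/2}$ one may use $|p_n(u)w(u)|\le Ca_n^{-1/2}|1-|u|/a_n|^{-1/4}$ and absorb the factor $|1-|u|/a_n|^{-1/4}\le Cw^{\delta-1}(u)$ (this is exactly where $\delta<1$ is needed --- it is not a ``trivial downgrade'' of $w^2$ to $w^\delta$), giving $J_1\le C\frac{\sqrt{a_n}}{n}w^\delta(t)$ with no stray power of $nT(a_n)$; (iii) on $|u|\ge a_{n/2}\ge a_{r[n/(2r)]}$ the restricted-range inequality (Lemma~\ref{Lemma2.7}) applies to both $P$ and $1-P$, so the tail is $O(\exp(-cn/\sqrt{T(a_n)}))$. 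Your route via Lemma~\ref{Lemma2.12} cannot do this: the sandwich polynomials have degree $2n-1$, so orthogonality fails, Lemma~\ref{Lemma2.7} only controls them beyond $a_{2rn}>a_n$ (which does not separate the turning point), and $\int R_tp_nw^2\approx-\Lambda_n(t)$ is circular rather than ``controlled''. You acknowledge all of this but leave the factor $(nT(a_n))^{1/6}$ unabsorbed, which is precisely the obstruction.

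Two further points are off. The bulk/tail split should be at $a_{sn}$ with $s<1$ (the paper uses $a_{n/2}$): with $s>1$ the bulk contains $\pm a_n$, where $|v^2-a_n^2|^{-1/4}$ blows up and the pointwise bound is useless. And the constant $d$ is not there to ``kill polynomial-in-$n$ slack'': it is chosen so that the exponentially small tail term satisfies $\exp(-cn/\sqrt{T(a_n)})\le w(t)$ for $|t|\le a_{dn}$, via $Q(a_{dn})\le C\,Q(a_n)\sim Cn/\sqrt{T(a_n)}$ from the doubling estimate $Q(a_{t/2})/Q(a_t)\le L<1$ (Lemma~\ref{Lemma2.1}(2),(3),(4)); Lemma~\ref{Lemma2.1}(5) only enters to replace the resulting exponential by $\sqrt{a_n}/n$.
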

\begin{proof} We consider the case of $n$ large enough.
We use $r > 1$ and $P \in \mathcal{P}_{\left[\frac{n}{2r}\right]}$ in Lemma \ref{Lemma2.7}.
By Lemma \ref{Lemma2.10}, we have that for $t\in \mathbb{R}$ and $n=1,2,\ldots,$
\begin{equation*}
 E_{1,n}(w^{\delta};\chi_t)\le C\frac{a_{n/\delta}}{n/\delta}w^{\delta}(t)
 \le C \frac{a_n}{n}w^{\delta}(t).
\end{equation*}
So there exists $P\in \mathcal{P}_m$, $m=\left[\frac{n}{2r}\right]$ such that
\begin{equation}\label{eq2.7}
\int_{\mathbb{R}} |\chi_t(u)-P(u)|w^\delta(u)du \le C\frac{a_n}{n}w^{\delta}(t).
\end{equation}
Here, we note that for $n$ large enough, $\frac14 n \le rm \le \frac12n$.
Hence, by the orthogonal polynomial $p_n$ and $P-1\in\mathcal{P}_{n-1}$, we have
\begin{eqnarray*}
\left|\Lambda_n(t)\right|&=&\left|\int_{-\infty}^\infty (1-\chi_t(u))p_n(u)w^2(u)du\right|\\
&=& \left|\int_{-\infty}^\infty (\chi_t(u)-P(u))p_n(u)w^2(u)du\right|\\
&\le& \int_{|u|\le a_{n/2}} \left|(\chi_t(u)-P(u))p_n(u)w^2(u)\right|du \\
&& \qquad +\int_{|u|\ge a_{n/2}} \left|(\chi_t(u)-P(u))p_n(u)w^2(u)\right|du\\
&=&:J_1+J_2.
\end{eqnarray*}
By Lemma \ref{Lemma2.5} and (\ref{eq2.7})  we see
\begin{eqnarray}\label{eq2.8}\nonumber
J_1
&\le& C \frac{1}{\sqrt{a_n}} \int_{|u|\le a_{n/2}} |(\chi_t(u)-P(u))\left|1-\frac{|u|}{a_n}\right|^{-1/4}w(u)|du\\\nonumber
&\le& C \frac{1}{\sqrt{a_n}} \int_{|u|\le a_{n/2}} |(\chi_t(u)-P(u))w^\delta(u)|du\\
&\le& C \frac{\sqrt{a_n}}{n}w^\delta(t).
\end{eqnarray}
Here we used the fact that
$ \left|1-\frac{|u|}{a_n}\right|^{-1/4}\le Cw^{\delta-1} (u)$ for $|u| \le a_{n/2}$,
and the Mhaskar-Rakhmanov-Saff number for the weight $w^\delta (x)$ is $a_{n/\delta}$.

   Next, we estimate $J_2$. From (\ref{eq2.7}), we know
\begin{equation}\label{eq2.9}
\int_{\mathbb{R}}\left|P(u)\right|w(u)du
\le \int_{\mathbb{R}}|\chi_t(u)-P(u)|w(u)du+\int_{\mathbb{R}}|\chi_t(u)|w(u)du\le C.
\end{equation}
Since $1-P, P\in\mathcal{P}_{\left[\frac{n}{2r}\right]}$,
using Lemma \ref{Lemma2.7}, Lemma \ref{Lemma2.8} with $p=1$, $q=2$ and (\ref{eq2.9}), we have
\begin{eqnarray}\label{eq2.10} \nonumber
&&\left\{\int_{|u|\ge a_{n/2}}\left|P(u)\right|^2w^2(u)du\right\}^{1/2}
\le \left\{\int_{|u|\ge a_{r\left[\frac{n}{2r}\right]}}\left|P(u)\right|^2w^2(u)du\right\}^{1/2}\\ \nonumber
&\le& C_1 \exp\left(-C\frac{n}{\sqrt{T(a_n)}}\right)
\left\{\int_{\mathbb{R}}\left|P(u)\right|^2w^2(u)du\right\}^{1/2}\\ \nonumber
&\le& C_1 \exp\left(-C\frac{n}{\sqrt{T(a_n)}}\right)\left\{\frac{n\sqrt{T(a_n)}}{a_n}\right\}^{1/2}
\int_{\mathbb{R}}|(P(v))w(v)| dv\\
&\le& C_1 \exp\left(-C_2\frac{n}{\sqrt{T(a_n)}}\right),
\end{eqnarray}
and similarly,
\begin{equation}\label{eq2.11}
  \left\{\int_{|u|\ge a_{n/2}}|1-P(u)|^2w^2(u)du\right\}^{1/2}\le C_1 \exp\left(-C_2\frac{n}{\sqrt{T(a_n)}}\right).
\end{equation}
Since we know
\begin{equation*}
|\chi_t(u)-P(u)|^2\le \left(|1-P(u)|^2+|P(u)|^2\right),
\end{equation*}
using the Schwarz inequality, we see from (\ref{eq2.10}) and (\ref{eq2.11})
\begin{eqnarray*}
J_2 &\le& \left(\int_{|u|\ge a_{n/2}}|\chi_t(u)-P(u)|^2w^2(u)du\right)^{1/2}
\left(\int_{-\infty}^\infty p_n^2(u)w^2(u)du\right)^{1/2}\\
&=& \left(\int_{|u|\ge a_{n/2}}|\chi_t(u)-P(u)|^2w^2(u)du\right)^{1/2} \\
&\le& C \left\{ \left(\int_{|u|\ge a_{n/2}}|1-P(u)|^2w^2(u)du\right)^{1/2}
+\left(\int_{|u|\ge a_{n/2}}|P(u)|^2w^2(u)du\right)^{1/2} \right\}\\
&\le& C_1\exp\left(-C_2\frac{n}{\sqrt{T(a_n)}}\right).
\end{eqnarray*}
Here we will show that there exists $0 < d <1$ such that
\begin{equation}\label{eq2.12}
  \exp\left(-\frac{C_2}{2}\frac{n}{\sqrt{T(a_n)}}\right)\le w(t), \quad  |t|\le a_{dn}.
\end{equation}
By Lemma \ref{Lemma2.1} (3), (4), we have for some constant $0< L <1$
\begin{equation*}
\frac{Q(a_{t/2})}{Q(a_{t})}\le \left(\frac{a_{t/2}}{a_{t}}\right)^{\max\left\{\Lambda,C_3T(a_t)\right\}}\le \left(1-\frac{C_4}{T(a_t)}\right)^{\max\left\{\Lambda,C_3T(a_t)\right\}} \le L < 1.
\end{equation*}
Then for a positive integer $k$,
\begin{equation*}
\frac{Q\left(a_{\frac{t}{2^k}}\right)}{Q(a_t)} \le L^k.
\end{equation*}
It means that $\frac{Q\left(a_{\frac{t}{2^k}}\right)}{Q(a_{t})}\to 0$ as $k\to \infty$.
Therefore, we see that for any constant $C>0$, there exists a constant $0< d <1$ such that
\begin{equation}\label{eq2.13}
\frac{Q(a_{dn})}{Q(a_{n})} \le C.
\end{equation}
From Lemma \ref{Lemma2.1} (2) and (\ref{eq2.13}), there exist constants $C_5>0$ and $0 < d \le 1$ such that
\[
\frac{C_2}{2}\frac{n}{\sqrt{T(a_n)}} \ge C_5 Q(a_n) \ge Q(a_{dn}).
\]
Thus, (\ref{eq2.12}) is proved.
Therefore, there exists a constant $0<d\le 1$ such that
\begin{equation}\label{eq2.14}
J_2
\le C_1\exp\left(-\frac{C_2}{2}\frac{n}{\sqrt{T(a_n)}}\right)w(t), \quad |t|\le a_{dn}.
\end{equation}
Here, by (\ref{eq2.1}) we see that for $n$ large enough,
\begin{equation}\label{eq2.15}
  \exp\left(-\frac{C_2}{2}\frac{n}{\sqrt{T(a_n)}}\right)\le C_1\frac{1}{n} \le C_1 \frac{\sqrt{a_n}}{n}.
\end{equation}
Hence (\ref{eq2.14}) and (\ref{eq2.15}) imply
\begin{equation}\label{eq2.16}
J_2\le  C_1 \frac{\sqrt{a_n}}{n}w (t), \quad |t|\le a_{dn}.
\end{equation}
Consequently, from (\ref{eq2.8}) and (\ref{eq2.16}) we have the result (\ref{eq2.6}).
\end{proof}

\setcounter{equation}{0}
\section{Proof of Theorem \ref{Theorem1.3}}

\begin{proof}[Proof of Theorem \ref{Theorem1.3}]
We will consider only for $x\ge 0$,
because for the other cases, it can be shown similarly.
Let $w=\exp(-Q)\in \mathcal{F}(C^2+)$. Let $x\in \mathbb{R}$ be fixed, then we consider  $n\in \mathbb{N}$ large enough such as
\begin{equation}\label{eq3.1}
  |x|\le a_{dn}/6,
\end{equation}
where $a_{dn}$ is defined in (\ref{eq2.6}).
Without loss of generality, we may assume that
$f(x)=0$,
so, exchange $f(t)-f(x)$ with $f(t)$. Then we may estimate
\begin{equation*}
|s_n(f,x)|=\left|\int_{-\infty}^\infty K_n(x,t)f(t)w^2(t)dt\right|=\left|\int_{-\infty}^\infty K_n(x,x+t)f(x+t)w^2(x+t)dt\right|.
\end{equation*}
For $|t|\ge\frac{a_n}{n}$ and a fixed $x$, we define
\begin{equation}\label{eq3.2}
  a_{dn}^*:=a_{dn}-x.
\end{equation}
Noting $f(x)=0$, and by (\ref{eq1.1}), we split $s_n(f,x)$ in five terms as follows:
\begin{equation*}
  s_n(f,x)=\int_{-\infty}^\infty K_n(x,x+t)f(x+t)w^2(x+t)dt=:\sum_{k=1}^5 I_k,
\end{equation*}
where, with $H(t):=K_n(x,x+t)f(x+t)w^2(x+t)$,
\begin{eqnarray*}
&&I_1:=\int_{|t|\le \frac{a_n}{n}}H(t)dt,\quad
I_2:=\int_{-a_{dn}^*-2x}^{-a_n/n}H(t)dt, \quad I_3:=\int_{a_n/n}^{a_{dn}^*}H(t)dt,\\
&&I_4:=\int_{-\infty}^{-a_{dn}^*-2x}H(t)dt, \quad  I_5:=\int_{a_{dn}^*}^\infty H(t)dt.
\end{eqnarray*}
First, we estimate $I_1$. Using the Schwarz inequality and the estimates on the Christoffel functions from Lemma \ref{Lemma2.2} and Lemma \ref{Lemma2.4}(note $\varphi_n(x) \sim n/a_n$ under the assumption (\ref{eq3.1})),
\begin{eqnarray*}
K_n^2(x,x+t)&\le& K_n(x,x)K_n(x+t,x+t)\\
&\le& C\varphi_n^{-1}(x)\varphi_n^{-1}(x+t)w^{-2}(x)w^{-2}(x+t)\\
&\le& C \left(\frac{n}{a_n}\right)^2\sqrt{T(x+t)}w^{-2}(x)w^{-2}(x+t).
\end{eqnarray*}
Therefore, we have
\begin{equation}\label{eq3.3}
  H(t)\le C \frac{n}{a_n}w^{-1}(x)f(x+t)T^{1/4}(x+t)w(x+t).
\end{equation}
Hence, for some $0 < \delta < 1$ we have
\begin{eqnarray}\nonumber
|I_1|&\le&  C \frac{n}{a_n}w^{-1}(x)
\int_{|t|\le a_n/n}\left|f(x+t)w^\delta(x+t)\right|dt\\\nonumber
&\le& C\frac{n}{a_n}w^{-1}(x)
\int_{|t|\le a_n/n}w^\delta(x+t)\int_{[x,x+t]}\left|df(u)\right|dt
\end{eqnarray}
(note $f(x)=0$).
By Lemma \ref{Lemma2.3} (b) we have $w(x+t)\sim w(x)$ (note (\ref{eq3.1})), so
\begin{eqnarray}\label{eq3.4}\nonumber
|I_1|&\le&  C \frac{n}{a_n}w^{-1}(x)
\int_{|t|\le a_n/n}\int_{[x,x+t]}w^\delta(u)\left|df(u)\right|dt\\\nonumber
&\le& C \frac{n}{a_n}w^{-1}(x)\int_{|t|\le a_n/n}V_\delta\left(\left[x,x+t\right],f\right)dt\\
&\le& Cw^{-1}(x)V_\delta\left(\left[x-\frac{a_n}{n},x+\frac{a_n}{n}\right],f\right).
\end{eqnarray}
Secondly, we estimate $I_3$.
By (\ref{eq1.2}) we have
\begin{equation*}
  K_n(x,x+t)=\frac{\gamma_{n-1}}{\gamma_n}\frac{p_{n-1}(x)p_n(x+t)-p_n(x)p_{n-1}(x+t)}{t}.
\end{equation*}
Using this, we estimate $I_3$. We see that
\begin{equation*}
  I_3:=\frac{\gamma_{n-1}}{\gamma_n}\left\{p_{n-1}(x)I_{3,1}-p_n(x)I_{3,2}\right\},
\end{equation*}
where
\begin{eqnarray*}
&&I_{3,1}:=\int_{a_n/n}^{a_{dn}^*} p_n(x+t)\frac{f(x+t)}{t}w^2(x+t)dt,\\
&&I_{3,2}:=\int_{a_n/n}^{a_{dn}^*} p_{n-1}(x+t)\frac{f(x+t)}{t}w^2(x+t)dt.
\end{eqnarray*}
From $\gamma_{n-1}/\gamma_n\sim a_n$ (see Lemma \ref{Lemma2.6}), we have
\begin{equation*}
  |I_3|\le C a_n^{1/2}w^{-1}(x)\left\{|I_{3,1}|+|I_{3,2}|\right\},
\end{equation*}
because we see, from Lemma \ref{Lemma2.5}, that for $|x| \le a_{dn}/6$,
\[
\max \left\{\left|p_n(x)\right|, \left|p_{n-1}(x)\right|\right\}
\le Ca_n^{-1/2}w^{-1}(x)\left|1-\frac{|x|}{a_n}\right|^{-1/4}
\le C a_n^{-1/2}w^{-1}(x).
\]

We use $\Lambda_n(x)$ in Lemma \ref{Lemma2.14}. Applying integration by parts, we have
\begin{eqnarray*}
&&I_{3,1}=\frac{n}{a_n}f\left(x+\frac{a_n}{n}\right)\Lambda_n\left(x+\frac{a_n}{n}\right)
-\frac{1}{a_{dn}^*}f(x+a_{dn}^*)\Lambda_n(x+a_{dn}^*)\\
&&   -\int_{a_n/n}^{a_{dn}^*} \frac{\Lambda_n(x+t)|df(x+t)|}{t}+\int_{a_n/n}^{a_{dn}^*} \frac{\Lambda_n(x+t)f(x+t)}{t^2}dt.
\end{eqnarray*}
When $0< t\le a_{dn}^*$, we see that $|x+t|\le a_{dn}$ (see (\ref{eq3.2})). Hence, by (\ref{eq2.6})
we have
\begin{eqnarray}\label{eq3.5}\nonumber
\sqrt{a_n}|I_{3,1}|
&\le&  C\bigg\{ \left|f\left(x+\frac{a_n}{n}\right)\right|w^\delta\left(x+\frac{a_n}{n}\right)
+\frac{1}{n}|f(a_{dn})|w^\delta(a_{dn})\\
&&   +\frac{a_n}{n}\int_{a_n/n}^{a_{dn}^*} \frac{w^\delta(x+t)|df(x+t)|}{t} +\frac{a_n}{n}\int_{a_n/n}^{a_{dn}^*}\frac{w^\delta(x+t)|f(x+t)|}{t^2}dt\bigg\}.
\end{eqnarray}
Since $V_\delta(\left[x,x+\frac{a_n}{n}\right]) \le V_\delta(\left[x,x+\frac{a_n}{k}\right])$ for $1 \le k \le n$,
we have from Lemma \ref{Lemma2.11}
\begin{eqnarray}\label{eq3.6}\nonumber
w^\delta\left(x+\frac{a_n}{n}\right)\left|f\left(x+\frac{a_n}{n}\right)\right|
&\le& C V_\delta \left(\left[x,x+\frac{a_n}{n}\right],f\right)\\
&\le& C \frac{1}{n}\sum_{k=1}^n V_\delta\left(\left[x,x+\frac{a_n}{k}\right],f\right)
\end{eqnarray}
and
\begin{equation}\label{eq3.7}
|f(a_{dn})|w^\delta(a_{dn})\le C V_\delta(\left[x,a_{dn}\right],f)
\end{equation}
(note $f(x)=0$).
On the other hand, substituting $u=\frac{a_n}{t}$ and
decreasing of $V_\delta(\left[x,x+\frac{a_n}{u}\right],f)$ for $u$,
we have
\begin{eqnarray}\label{eq3.8} \nonumber
&&\int_{a_n/n}^{a_{dn}^*} \frac{w^\delta(x+t)|f(x+t)|}{t^2}dt
\le C \int_{a_n/n}^{a_{dn}^*} \frac{V_\delta\left(\left[x,x+t\right],f\right)}{t^2}dt\\ \nonumber
&& \quad =\frac{1}{a_n}\int_{a_n/a_{dn}^*}^n V_\delta\left(\left[x,x+\frac{a_n}{u}\right],f\right)du
\le \frac{1}{a_n}\sum_{k=1}^n\int_{k}^{k+1} V_\delta\left(\left[x,x+\frac{a_n}{u}\right],f\right)du \\
&& \quad \le \frac{1}{a_n}\sum_{k=1}^n V_\delta\left(\left[x,x+\frac{a_n}{k}\right],f\right).
\end{eqnarray}
We estimate the remaining term in (\ref{eq3.5}). Using integration by parts  and (\ref{eq3.8}), we have
\begin{eqnarray}\label{eq3.9}
&&\int_{a_n/n}^{a_{dn}^*} \frac{w^\delta(x+t)|df(x+t)|}{t} \\\nonumber
&=&
\frac{1}{a_{dn}^*}V_\delta\left(\left[x,x+a_{dn}^*\right],f\right)
-\frac{n}{a_n}V_\delta\left(\left[x,x+\frac{a_n}{n}\right],f\right)
+\int_{a_n/n}^{a_{dn}^*} \frac{V_\delta\left(\left[x,x+t\right],f\right)}{t^2}dt\\ \nonumber
&\le& C \frac{1}{a_n}\sum_{k=1}^n V_\delta\left(\left[x,x+\frac{a_n}{k}\right],f\right).
\end{eqnarray}
Hence, substituting (\ref{eq3.6}), (\ref{eq3.7}), (\ref{eq3.8}) and (\ref{eq3.9}) into (\ref{eq3.5}), we get
\begin{equation}\label{eq3.10}
a_n^{1/2}|I_{3,1}|
\le C\frac{1}{n}\left(\sum_{k=1}^n V_\delta\left(\left[x,x+\frac{a_n}{k}\right],f\right)
+ V_\delta(\left[x,a_{dn}\right],f)\right).
\end{equation}
For $I_{3,2}$ we obtain the estimate as (\ref{eq3.10}), so we have for a constant $\alpha>0$,
\begin{equation}\label{eq3.11}
|I_{3}|\le C w^{-1}(x)
\frac{1}{n}\left(\sum_{k=1}^n V_\delta\left(\left[x,x+\frac{a_n}{k}\right],f\right)
+ V_\delta(\left[x,a_{dn}\right],f)\right).
\end{equation}
Thirdly, we estimate $I_5$. Using (\ref{eq3.3}), we have
\begin{eqnarray}\label{eq3.12}\nonumber
|I_5|&\le&  C \frac{n}{a_n}w^{-1}(x)\int_{a_{dn}^*}^\infty w(x+t)T^{1/4}(x+t)|f(x+t)|dt\\
&\le& C\frac{n}{a_n}w^{-1}(x)\left\{I_{5,1}+I_{5,2}\right\},
\end{eqnarray}
where
\begin{eqnarray*}
&&I_{5,1}:=\int_{a_{dn}^*}^\infty w(x+t)T^{1/4}(x+t)\int_0^{a_{dn}^*}|df(x+u)|dt,\\
&&I_{5,2}:=\int_{a_{dn}^*}^\infty w(x+t)T^{1/4}(x+t)\int_{a_{dn}^*}^t|df(x+u)|dt
\end{eqnarray*}
(note $f(x+t)=\int_0^tdf(x+u)$). Since $a_{dn}-x=a_{dn}^*\le t$,
we see for $x+t=a_s \ge a_{dn}$
\begin{equation}\label{no_A}
\frac{T^{1/4}(a_s)}{Q'(a_s)} \sim \frac{a_s}{sT^{1/4}(a_s)} \le C\frac{a_n}{nT^{1/4}(a_n)},
\end{equation}
because
\[
\frac{a_s/s}{a_n/n} \le C \frac{n}{s} \left(\frac{s}{n}\right)^{1/\Lambda}=\left(\frac{n}{s}\right)^{1-1/\Lambda} \le C,
\]
(see Lemma \ref{Lemma2.1} (6)). Then for every $u\ge a_{dn}^*$,
\begin{eqnarray}\label{eq3.13}\nonumber
\int_u^\infty w(x+t)T^{1/4}(x+t)dt
&=&  \int_u^\infty \frac{T^{1/4}(x+t)}{Q'(x+t)}Q'(x+t)w(x+t)dt\\
&\le&C\frac{a_n}{nT^{1/4}(a_n)}w(x+u).
\end{eqnarray}
Therefore, with integration by parts and (\ref{eq3.13}),
\begin{eqnarray}\label{eq3.14}\nonumber
|I_{5,2}|
&=& \left|\int_{a_{dn}^*}^\infty \int_u^\infty w(x+t)T^{1/4}(x+t)dt |df(x+u)| \right| \\ \nonumber
&\le& C \frac{a_n}{nT^{1/4}(a_n)}\int_{a_{dn}^*}^\infty w(x+u)|df(x+u)|\\\nonumber
&\le& C \frac{a_n}{nT^{1/4}(a_n)}V_1\left(\left[x+a_{dn}^*,\infty\right],f\right)\\
&=&C\frac{a_n}{nT^{1/4}(a_n)}V_1\left(\left[a_{dn},\infty\right],f\right),
\end{eqnarray}
and using (\ref{eq3.13}) with $u=a_{dn}^*$,
\begin{equation}\label{eq3.15}
|I_{5,1}|\le C \frac{a_n}{nT^{1/4}(a_n)}w(x+a_{dn}^*)\int_0^{a_{dn}^*}|df(x+u)|.
\end{equation}
Let
\[
x+a_{\frac{dn}{2}}^*=a_{\frac{dn}{2}}.
\]
Therefore,
there exists $\varepsilon>0$ such that
\begin{eqnarray}\label{eq3.16}\nonumber
w(x+a_{dn}^*)\int_0^{a_{\frac{dn}{2}}^*}|df(x+u)|
&\le& \frac{w(x+a_{dn}^*)}{w(x+a_{\frac{dn}{2}}^*)}
\int_0^{a_{\frac{dn}{2}}^*}w(x+u)|df(x+u)|\\\nonumber
&\le& \frac{w(a_{dn})}{w(a_{\frac{dn}{2}})}V_1\left(\left[x,a_{\frac{dn}{2}}\right],f\right)\\
&\le& C \exp(-cn^{\varepsilon})V_1\left(\left[x,a_{\frac{dn}{2}}\right],f\right).
\end{eqnarray}
The last inequality holds as follows.
\begin{equation*}
Q(a_{dn})-Q\left(a_{\frac{dn}{2}}\right)\ge Q'(a_{\frac{dn}{2}})\left(a_{dn}-a_{\frac{dn}{2}}\right)
\ge CnT^{-1/2}(a_n) \ge C n^{\varepsilon}, \quad \textrm{for some } \varepsilon >0.
\end{equation*}
Therefore we have the last inequality in (\ref{eq3.16}).
Since we consider only $n$ such that $|x|\le a_{dn}/6$ when $a_{\frac{dn}{2}}^*\le u\le a_{dn}^*$, 
we have $0\le x+u\le x+a_{dn}^*=a_{dn}$. Moreover, there exists $c_1>0$ such that
\begin{eqnarray}\label{eq3.17}\nonumber
&&w(x+a_{dn}^*)\int_{a_{\frac{dn}{2}}^*}^{a_{dn}^*}|df(x+u)|\le \int_{a_{\frac{dn}{2}}^*}^{a_{dn}^*}w(x+u)|df(x+u)|\\
&\le& C V_1\left(\left[(x+a_{\frac{dn}{2}}^*,\infty\right],f\right)
= C V_1\left(\left[a_{\frac{dn}{2}},\infty\right],f\right).
\end{eqnarray}
Substituting (\ref{eq3.16}), (\ref{eq3.17}) into (\ref{eq3.15}), we have
\begin{equation}\label{eq3.18}
|I_{5,1}|\le C \frac{a_n}{nT^{1/4}(a_n)}
\left(\exp(-cn^{\varepsilon/2})V_1\left(\left[x,a_{\frac{dn}{2}}\right],f\right)
+V_1\left(\left[a_{\frac{dn}{2}},\infty\right),f\right)\right).
\end{equation}
Together with (\ref{eq3.14}), (\ref{eq3.18}) and (\ref{eq3.12}) we have for a constant $c_1>0$,
\begin{eqnarray}\label{eq3.19}\nonumber
|I_{5}|
&\le&
CT^{-1/4}(a_n)w^{-1}(x)\\ \nonumber
&&\times\left( \exp(-cn^{\varepsilon/2})V_1\left(\left[x,x+a_{\frac{dn}{2}}\right],f\right)
+V_1\left(\left[a_{\frac{dn}{2}},\infty\right),f\right)
\right)\\
&\le& CT^{-1/4}(a_n)w^{-1}(x)
\left(\frac{1}{n}V_1\left(\left[x,a_{\frac{dn}{2}}\right],f\right)
+V_1\left(\left[a_{\frac{dn}{2}},\infty\right),f\right)\right).
\end{eqnarray}
Fourth, we can obtain an estimate of $I_2$ as $I_3$. But we need to notice slightly.
Let us define
\begin{equation*}
  I_2:=\frac{\gamma_{n-1}}{\gamma_n}\left\{p_{n-1}(x)I_{2,1}-p_n(x)I_{2,2}\right\},
\end{equation*}
where
\begin{eqnarray*}
&&I_{2,1}:=\int_{-a_{dn}^*-2x}^{-a_n/n} p_n(x+t)\frac{f(x+t)}{t}w^2(x+t)dt,\\
&&I_{2,2}:=\int_{-a_{dn}^*-2x}^{-a_n/n} p_{n-1}(x+t)\frac{f(x+t)}{t}w^2(x+t)dt.
\end{eqnarray*}
Then we have
\begin{equation*}
  |I_2|\le C a_n^{1/2}w^{-1}(x)\left\{|I_{2,1}|+|I_{2,2}|\right\}.
\end{equation*}
The formula corresponding to (\ref{eq3.5}) is
\begin{eqnarray}\label{eq3.20}
&&\sqrt{a_n}|I_{2,1}|
\le  C\bigg\{\left|f\left(x-\frac{a_n}{n}\right)\right|w^\delta\left(x-\frac{a_n}{n}\right)
+\frac{1}{n}|f(-a_{dn})|w^\delta(-a_{dn})\\\nonumber
&&   +\frac{a_n}{n}\int_{-a_{dn}^*-2x}^{-a_n/n} \frac{w^\delta(x+t)|df(x+t)|}{t}
+\frac{a_n}{n}\int_{-a_{dn}^*-2x}^{-a_n/n}\frac{w^\delta(x+t)|f(x+t)|}{t^2}dt\bigg\}.
\end{eqnarray}
As (\ref{eq3.6}) we have, using Lemma \ref{Lemma2.11},
\begin{equation}\label{eq3.21}
  w^\delta\left(x-\frac{a_n}{n}\right)\left|f\left(x-\frac{a_n}{n}\right)\right|
  \le C\exp(c_1xQ'(x)) \frac{1}{n}\sum_{k=1}^n V_\delta\left(\left[x-\frac{a_n}{k},x\right],f\right).
\end{equation}
Since $a_n\ge a_{dn}\ge 2x>0$, using Lemma \ref{Lemma2.11} with $f(-a_{dn})-f(x)=f(-a_{dn})$,
\begin{equation}\label{eq3.22}
|f(-a_{dn})|w^\delta(-a_{dn})
\le CV_\delta(\left[-a_{dn},x\right],f).
\end{equation}
From Lemma \ref{Lemma2.11} again
\begin{equation*}
\int_{-a_{dn}^*-2x}^{-a_n/n} \frac{w^\delta(x+t)|f(x+t)|}{t^2}dt\le C \exp(c_1xQ'(x))\int_{-a_{dn}^*-2x}^{-a_n/n} \frac{V_\delta(\left[x+t,x\right],f)}{t^2}dt.
\end{equation*}
Here we note that in the case of $|t|\le 2x, xt<0$ we need the factor $\exp(c_1xQ'(x))$. Let $u:=-\frac{a_n}{t}$. Then noting the fact that $V_\delta(\left[x-\frac{a_n}{u},x\right],f)$ is a decreasing function of $u$, we have
\begin{eqnarray}\label{eq3.23}
\int_{-a_{dn}^*-x}^{-a_n/n} \frac{V_\delta(\left[x+t,x\right],f)}{t^2}dt
&=&\frac{1}{a_n}\int_{\frac{a_n}{a_{dn}}}^{n} V_\delta(\left[x-\frac{a_n}{u},x\right],f)du\\ \nonumber
&\le&
\frac{1}{a_n}\sum_{k=1}^n V_\delta\left(\left[x-\frac{a_n}{k},x\right],f\right)
\end{eqnarray}
and
\begin{eqnarray*}
\int_{-a_{dn}^*-2x}^{-a_{dn}^*-x} \frac{V_\delta(\left[x+t,x\right],f)}{t^2}dt
&\le&\frac{1}{a_{dn}}V_\delta(\left[-a_{dn},x\right],f).
\end{eqnarray*}
Therefore, with (\ref{eq3.23}) we have
\begin{eqnarray}\label{eq3.24} \nonumber
&&\int_{-a_{dn}^*}^{-a_n/n} \frac{w^\delta(x+t)|f(x+t)|}{t^2}dt \\
&\le& C\frac{\exp(c_1xQ'(x))}{a_n}
\left(\sum_{k=1}^n V_\delta\left(\left[x-\frac{a_n}{k},x\right],f\right)
+V_\delta(\left[-a_{dn},x\right],f) \right).
\end{eqnarray}
We estimate the remaining term in (\ref{eq3.20}). Using integration by parts  and (\ref{eq3.24}), we have
\begin{eqnarray}\label{eq3.25}\nonumber
&&\int_{-a_{dn}^*-2x}^{-a_n/n} \frac{w^\delta(x+t)|df(x+t)|}{t}\\ \nonumber
&=&\frac{1}{a_{dn}^*+2x}V_\delta\left(\left[-a_{dn},x\right],f\right)-\frac{n}{a_n}V_\delta \left(\left[x-\frac{a_n}{n},x\right],f\right)\\\nonumber
&& +\int_{-a_{dn}^*-2x}^{-a_n/n} \frac{w^\delta(x+t)|f(x+t)|}{t^2}dt\\
&\le&
C_1\left(\frac{1}{a_{dn}}V_\delta\left(\left[-a_{dn},x\right],f\right)
+\frac{n}{a_n}V_\delta \left(\left[x-\frac{a_n}{n},x\right],f\right)\right) \\ \nonumber
&& +C_2\frac{\exp(c_1xQ'(x))}{a_n}
\left(\sum_{k=1}^n V_\delta\left(\left[x-\frac{a_n}{k},x\right],f\right)
+V_\delta(\left[-a_{dn},x\right],f) \right).
\end{eqnarray}
Hence, substituting (\ref{eq3.21}) (\ref{eq3.22}), (\ref{eq3.24}) and (\ref{eq3.25}) into (\ref{eq3.20}), we get
\begin{equation}\label{eq3.26}
\sqrt{a_n}|I_{2,1}|
\le C\frac{\exp(c_1xQ'(x))}{n}
\left(\sum_{k=1}^n V_\delta\left(\left[x-\frac{a_n}{k},x\right],f\right)
+V_\delta(\left[-a_{dn},x\right],f) \right).
\end{equation}
Similarly, for $I_{2,2}$ we obtain the estimate as (\ref{eq3.26}), so we have
\begin{eqnarray}\label{eq3.27} \nonumber
|I_{2}|&\le& C w^{-1}(x)\frac{\exp(c_1xQ'(x))}{n}\\
&& \times \left(\sum_{k=1}^n V_\delta\left(\left[x-\frac{a_n}{k},x\right],f\right)
+V_\delta(\left[-a_{dn},x\right],f) \right).
\end{eqnarray}
Lastly, the estimate of $I_4$ also is obtained as $I_5$. Using (\ref{eq3.3}), we have
\begin{eqnarray}\label{eq3.28}\nonumber
|I_4| &\le& C \frac{n}{a_n}w^{-1}(x)\int_{-\infty}^{-a_{dn}^*-2x} w(x+t)|(T^{1/4}f)(x+t)|dt\\
&\le& C\frac{n}{a_n}w^{-1}(x)\left\{I_{4,1}+I_{4,2}\right\},
\end{eqnarray}
where
\begin{eqnarray*}
&&I_{4,1}:=\int_{-\infty}^{-a_{dn}^*-2x} w(x+t)T^{1/4}(x+t)\int_{-a_{dn}^*-2x}^0|df(x+u)|dt,\\
&&I_{4,2}:=\int_{-\infty}^{-a_{dn}^*-2x} w(x+t)T^{1/4}(x+t)\int_t^{-a_{dn}^*-2x}|df(x+u)|dt
\end{eqnarray*}
(note $f(x+t)=\int_0^tdf(x+u)$). Let $t\le -a_{dn}^*-2x$.
For every $u\le -a_{dn}$,
\begin{eqnarray}\label{eq3.29}\nonumber
\int_{-\infty}^u w(x+t)T^{1/4}(x+t)dt
&=& \left|\int_{-\infty}^u w(x+t)Q'(x+t)\frac{T^{1/4}(x+t)}{Q'(x+t)}dt\right|\\
&\le& C\frac{a_n}{nT^{1/4}(a_n)}w(x+u)
\end{eqnarray}
because for $x+t=-a_s \le -a_{dn}$
\[
\frac{T^{1/4}(a_s)}{Q'(a_s)}  \le C\frac{a_n}{nT^{1/4}(a_n)},
\]
(see (\ref{no_A})). Therefore, with integration by parts and (\ref{eq3.29}),
\begin{eqnarray}\label{eq3.30}\nonumber
|I_{4,2}|
&\le& C\frac{a_n}{nT^{1/4}(a_n)}\int_{-\infty}^{-a_{dn}^*-2x} w(x+u)|df(x+u)|\\
&\le& C\frac{a_n}{nT^{1/4}(a_n)}V_1(\left[-\infty,-a_{dn}\right],f).
\end{eqnarray}
Just like (\ref{eq3.15}) and  (\ref{eq3.16}), 
using (\ref{eq3.29}) with $u=-a_{dn}^*$,
\begin{eqnarray}\label{eq3.31}
|I_{4,1}|
&\le& C\frac{a_n}{nT^{1/4}(a_n)}w(-a_{dn})\int_{-a_{dn}^*-2x}^0|df(x+u)|\\\nonumber
&\le& C \exp(-cn^{\varepsilon})V_1(\left[-a_{dn},x\right],f).
\end{eqnarray}
Hence, from (\ref{eq3.30}), (\ref{eq3.31}) and (\ref{eq3.28}),
\begin{eqnarray}\label{eq3.32} \nonumber
&&|I_4|
\le CT^{-1/4}(a_n)w^{-1}(x)\\ \nonumber
&& \qquad \times\left(V_1(\left[-\infty,-a_{dn}\right],f)+ T^{1/2}(a_n)\exp(-cn^{\varepsilon})V_1\left(\left[-a_{\frac{dn}{2}},x\right],f\right)\right)\\
&& \qquad \le CT^{-1/4}(a_n)w^{-1}(x)
\left(V_1(\left(-\infty,-a_{dn}\right],f)+ \frac{1}{n}V_1\left(\left[-a_{\frac{dn}{2}},x\right],f\right)\right).
\end{eqnarray}
We summarize the above results. First, we note
\begin{equation*}
w^{-1}(x)=\exp\left(\frac{xQ'(x)}{T(x)}\right)\leqslant \exp\left(\frac{1}{\Lambda} xQ'(x)\right).
\end{equation*}
Hence, there exists $c>0$ such that
\begin{equation*}
w^{-1}(x)\exp(c_1xQ'(x))\leqslant \exp\left(cxQ'(x)\right).
\end{equation*}
From (\ref{eq3.4})
\begin{equation*}
|I_1|\leqslant C\exp\left(cxQ'(x)\right)\frac{1}{n}
\sum_{k=1}^nV_\delta\left(\left[x-\frac{a_n}{k},x+\frac{a_n}{k}\right],f\right).
\end{equation*}
From (\ref{eq3.11})
\begin{equation*}
|I_{3}|\le C \exp\left(cxQ'(x)\right)
\frac{1}{n}\left(\sum_{k=1}^n V_\delta\left(\left[x,x+\frac{a_n}{k}\right],f\right)
+ V_\delta(\left[x,a_{dn}\right],f)\right).
\end{equation*}
From (\ref{eq3.19})
\begin{equation*}
|I_{5}|
\le C \exp\left(cxQ'(x)\right)
\left(\frac{1}{nT^{1/4}(a_n)}V_1\left(\left[x,a_{\frac{dn}{2}}\right],f\right)
+\frac{1}{T^{1/4}(a_n)}V_1\left(\left[a_{\frac{dn}{2}},\infty\right),f\right)\right).
\end{equation*}
From (\ref{eq3.27})
\begin{equation*}
|I_{2}| \le C \exp\left(cxQ'(x)\right)\frac{1}{n}
\left(\sum_{k=1}^n V_\delta\left(\left[x-\frac{a_n}{k},x\right],f\right)
+V_\delta(\left[-a_{dn},x\right],f) \right).
\end{equation*}
From (\ref{eq3.32})
\begin{eqnarray*}
|I_4|
\le C\exp\left(cxQ'(x)\right)
\left(\frac{1}{nT^{1/4}(a_n)}V_1\left(\left[-a_{\frac{dn}{2}},x\right],f\right)
+\frac{1}{T^{1/4}(a_n)}V_1(\left(-\infty,-a_{dn}\right],f)\right).
\end{eqnarray*}
Hence, we have
\begin{equation*}
|I_1|+|I_2|+|I_3|\leqslant C\exp\left(cxQ'(x)\right)\left(\frac{1}{n}\sum_{k=1}^nV_\delta\left(\left[x-\frac{a_n}{k},x+\frac{a_n}{k}\right],f\right)
+\frac{1}{n}\int_{|u|\leqslant a_{dn}}w^\delta(u)|df(u)|\right),
\end{equation*}
and
\begin{equation*}
|I_4|+|I_5|\leqslant C\exp\left(cxQ'(x)\right)
\left(\frac{1}{nT^{1/4}(a_n)}\int_{|u|\leqslant a_{\frac{dn}{2}}}w(u)|df(u)|
+\frac{1}{T^{1/4}(a_n)}\int_{|u|\geqslant a_{\frac{dn}{2}}}w(u)|df(u)|\right).
\end{equation*}
Thus, we obtain (\ref{eq1.3}). We need to show (\ref{eq1.4}).
Let $m:=\left[\sqrt{a_n n}\right]$, then we see
\begin{eqnarray*}
&&\frac{1}{n}\sum_{k=1}^n V_\delta\left(\left[x-\frac{a_n}{k},x+\frac{a_n}{k}\right],f\right)\\
&=&\frac{1}{n}\sum_{k=1}^m V_\delta\left(\left[x-\frac{a_n}{k},x+\frac{a_n}{k}\right],f\right)
+\frac{1}{n}\sum_{k=m}^n V_\delta\left(\left[x-\frac{a_n}{k},x+\frac{a_n}{k}\right],f\right)\\
&\le&\frac{m}{n}V_\delta\left(\left[x-a_n,x+a_n\right],f\right)
+\frac{1}{n}\sum_{k=m}^n V_\delta\left(\left[x-\frac{a_n}{m},x+\frac{a_n}{m}\right],f\right)\\
&\le&\sqrt{\frac{a_n}{n}} V_\delta\left(\left[x-a_n,x+a_n\right],f\right)
+V_\delta\left(\left[x-\sqrt{\frac{a_n}{n}},x+\sqrt{\frac{a_n}{n}}\right],f\right).
\end{eqnarray*}
Therefore, the proof of (\ref{eq1.4}) is complete.
Here, it is clear that
\begin{equation*}
  \lim_{n\rightarrow \infty}\sqrt{\frac{a_n}{n}} V_\delta(\left[x-a_n,x+a_n\right],f)\le \lim_{n\rightarrow \infty}\sqrt{\frac{a_n}{n}} V_\delta(\mathbb{R},f)=0.
\end{equation*}
We have
\begin{equation*}
  V_\delta(\left[x-\varepsilon,x+\varepsilon\right],f)\le C \int_{x-\varepsilon}^{x+\varepsilon}|df(t)|,
\end{equation*}
and so
\begin{equation*}
  \lim_{\varepsilon\rightarrow 0}V_\delta(\left[x-\varepsilon,x+\varepsilon\right],f)=0.
\end{equation*}
Furthermore, we have
\begin{equation*}
\frac{1}{n}\int_{|u|\leqslant a_{dn}}w^\delta(u)|df(u)|\leqslant \frac{1}{n}V_\delta(\mathbb{R},f)\rightarrow 0
\quad \textrm{as} \quad n\rightarrow \infty,
\end{equation*}
and similarly
\begin{equation*}
\frac{1}{nT^{1/4}(a_n)}\int_{|u|\leqslant a_{\frac{dn}{2}}}w(u)|df(u)|,
\quad \frac{1}{T^{1/4}(a_n)}\int_{|u|\geqslant a_{\frac{dn}{2}}}w(u)|df(u)|\rightarrow 0,
\end{equation*}
as $n \to \infty$.
Consequently, it is proved that the sequence $\left\{s_n(f,x)\right\}$ converges to $f(x)$.
\end{proof}



\end{document}